\documentclass[a4paper,11pt,reqno]{amsart}

\usepackage[english]{babel}
\usepackage[utf8]{inputenc}
\usepackage{setspace}

\usepackage{amsmath,nccmath,esdiff}
\usepackage{amsfonts,amssymb,latexsym,graphics,amsthm}
\usepackage[bookmarks=false]{hyperref}
\usepackage[dvips]{color}
\usepackage[dvips]{graphicx}
\usepackage{pstricks,pst-tree,rotating,stmaryrd}
\usepackage{url,multirow}

\addtolength{\oddsidemargin}{-0.85cm}
\addtolength{\evensidemargin}{-0.85cm}
\addtolength{\textwidth}{1.7cm}

\renewenvironment{itemize}{\begin{list}{\labelitemi}{\leftmargin=1.5em}}{\end{list}}
\renewcommand{\labelitemi}{$\bullet$}

\DeclareMathOperator{\cro}{cr}
\DeclareMathOperator{\sch}{sc}
\renewcommand{\neg}{ {\rm neg} }

\DeclareMathOperator{\emp}{em}
\DeclareMathOperator{\cc}{cc}
\DeclareMathOperator{\fh}{fh}
\DeclareMathOperator{\w}{w}

\DeclareMathOperator{\dda}{dda}
\DeclareMathOperator{\va}{va}
\DeclareMathOperator{\pk}{pk}

\newcommand{\nodr}{ \psset{unit=2mm}\begin{pspicture}(-1,-0.6)(1.4,1.6)
\psdot(0,0)
\pscurve(0,0)(0.7,0.8)(1,1)\pscurve[linestyle=dotted,dotsep=0.2mm](1,1)(1.3,1.2)(1.6,1.3)
\pscurve(0,0)(0.7,-0.8)(1,-1)\pscurve[linestyle=dotted,dotsep=0.2mm](1,-1)(1.3,-1.2)(1.6,-1.3)
\end{pspicture} }
\newcommand{\nodh}{\psset{unit=2mm}\begin{pspicture}(-1,-0.6)(1.4,1.6)
 \psdot(0,0)
\pscurve(0,0)(0.7,0.8)(1,1)\pscurve[linestyle=dotted,dotsep=0.2mm](1,1)(1.3,1.2)(1.6,1.3)
\pscurve(0,0)(-0.7,0.8)(-1,1)\pscurve[linestyle=dotted,dotsep=0.2mm](-1,1)(-1.3,1.2)(-1.6,1.3)
\end{pspicture}}
\newcommand{\nodb}{\psset{unit=2mm}\begin{pspicture}(-1,-0.6)(1.4,1.6)
 \psdot(0,0)
\pscurve(0,0)(0.7,-0.8)(1,-1)\pscurve[linestyle=dotted,dotsep=0.2mm](1,-1)(1.3,-1.2)(1.6,-1.3)
\pscurve(0,0)(-0.7,-0.8)(-1,-1)\pscurve[linestyle=dotted,dotsep=0.2mm](-1,-1)(-1.3,-1.2)(-1.6,-1.3)
\end{pspicture}}
\newcommand{\nodg}{\psset{unit=2mm}\begin{pspicture}(-1,-0.7)(1.4,1.3)
 \psdot(0,0)
\pscurve(0,0)(-0.7,0.8)(-1,1)\pscurve[linestyle=dotted,dotsep=0.2mm](-1,1)(-1.3,1.2)(-1.6,1.3)
\pscurve(0,0)(-0.7,-0.8)(-1,-1)\pscurve[linestyle=dotted,dotsep=0.2mm](-1,-1)(-1.3,-1.2)(-1.6,-1.3)
\end{pspicture} }
\newcommand{\nodf}{ \psset{unit=2mm}\begin{pspicture}(-1,-0.6)(1.4,1.6)
 \psdot(0,0)\pscurve(0,0)(-0.3,0.6)(0,0.8)(0.3,0.6)(0,0) \rput(0,1.2){$-$}\end{pspicture}
}

\newcommand{\nodl}{ \psset{unit=2.5mm}\begin{pspicture}(-1,-0.6)(1.4,0.6)
 \psdot(0,0)\pscurve(0,0)(-0.3,0.6)(0,0.8)(0.3,0.6)(0,0) \end{pspicture}
}
\newcommand{\nodll}{ \psset{unit=2.5mm}\begin{pspicture}(-1,-0.6)(1.4,0.6)
 \psdot(0,0)\pscurve(0,0)(-0.3,-0.6)(0,-0.8)(0.3,-0.6)(0,0) \end{pspicture}
}

\title[Enumeration of snakes and cycle-alternating permutations]
  {Enumeration of snakes and \\ cycle-alternating permutations}
\subjclass[2000]{Primary: 05A15, 05A19. Secondary: 11B83}
\date{\today}
\author{Matthieu Josuat-Vergès}
\dedicatory{To the memory of Vladimir Arnol'd}
\thanks{Supported by the French National Research Agency ANR, grant ANR08-JCJC-0011, and the Austrian Science Foundation FWF, START grant Y463}
\address{Fakultät für Mathematik, Universität Wien, 1090 Wien, Austria}
\email{Matthieu.Josuat-Verges@univie.ac.at}

\hypersetup{
    unicode=false,          
    pdftoolbar=true,        
    pdfmenubar=true,        
    pdffitwindow=false,     
    pdfstartview={FitH},    
    pdftitle={Enumeration of snakes and cycle-alternating permutations}, 
    pdfauthor={Matthieu Josuat-Vergès (Universität Wien)},
    pdfsubject={Enumerative and bijective combinatorics},
    pdfkeywords={enumeration, snakes, alternating permutations, Euler numbers, Springer numbers, continued fractions},
    pdfnewwindow=true,      
    colorlinks=true,       
    linkcolor=red,          
    citecolor=blue,        
    filecolor=magenta,      
    urlcolor=cyan           
}

\newtheorem{thm}{Theorem}[section]

\newtheorem{lem}[thm]{Lemma}
\newtheorem{prop}[thm]{Proposition}

\theoremstyle{definition}
\newtheorem{defn}[thm]{Definition}
\newtheorem{rem}[thm]{Remark}

\begin{document}
\newcommand{\bla}{
  \psset{unit=1mm} \begin{pspicture}(0,0) \rput(0,2){$\ddots$}   \end{pspicture}
}
\renewcommand{\qedsymbol}{
\psset{unit=1mm}\begin{pspicture}(1,3)\psframe[fillstyle=solid,fillcolor=black](0,0)(2,3)      \end{pspicture}
}

\begin{abstract}
Springer numbers are an analog of Euler numbers for the group of signed permutations. Arnol'd showed
that they count some objects called snakes, that generalize alternating permutations. Hoffman established
a link between Springer numbers, snakes, and some polynomials related with the successive derivatives
of trigonometric functions.

The goal of this article is to give further combinatorial properties of derivative polynomials, in terms of
snakes and other objects: cycle-alternating permutations, weighted Dyck or Motzkin paths, increasing
trees and forests. We obtain the generating functions, in terms of trigonometric functions for exponential
ones and in terms of $J$-fractions for ordinary ones. We also define natural $q$-analogs, make a link
with normal ordering problems and combinatorial theory of differential equations.
\end{abstract}

\maketitle

\section{Introduction}

It is well-known that the Euler numbers $E_n$ defined by
\begin{equation}
\sum_{n=0}^\infty E_n \frac{z^n}{n!} = \tan z + \sec z
\end{equation}
count alternating permutations in $\mathfrak{S}_n$, {\it i.e.} $\sigma$ such that
$\sigma_1>\sigma_2<\sigma_3>\dots \sigma_n$. The study of these, as well as other 
classes of permutations counted by $E_n$, is a vast topic in enumerative 
combinatorics, see the survey of Stanley~\cite{Sta}. By a construction due to
Springer~\cite{Spr}, there is an integer $K(W)$ defined for any Coxeter group $W$
such that $K(\mathfrak{S}_{n})=E_n$. As for the groups of signed permutations,
Arnol'd~\cite{Arn} introduced some particular kind of signed permutations called snakes,
counted by the numbers $K(\mathfrak{S}^B_n)$ and $K(\mathfrak{S}^D_n)$. Thus snakes
can be considered as a ``signed analog'' of alternating permutations. Algorithmically,
Arnol'd~\cite{Arn} gives a method to compute these integers with recurrences organized in
triangular arrays similar to the Seidel-Entriger triangle of Euler numbers $E_n$
(see for example \cite{GZ}).

In this article, we are mostly interested in the numbers $S_n=K(\mathfrak{S}^B_n)$, which were presviously
considered by Glaisher~\cite[§§ 109 and 119]{Gla} in another context. Springer~\cite{Spr} shows that they satisfy
\begin{equation}
\sum_{n=0}^\infty S_n \frac{z^n}{n!} = \frac{1}{\cos z - \sin z},
\end{equation}
and we call $S_n$ the $n$th Springer number (although there is in theory a Springer number associated with
each Coxeter group, this name without further specification usually refer to type $B$). Another link between snakes
and trigonometric functions has been established by Hoffman~\cite{Hof}, who studies polynomials associated
with the successive derivatives of $\tan$ and $\sec$, {\it i.e.} $P_n(t)$ and $Q_n(t)$ such that:
\begin{equation} \label{def_pq}
\diff[n]{}{x} \tan x = P_n(\tan x), \qquad 
\diff[n]{}{x} \sec x = Q_n(\tan x) \sec x.
\end{equation}
Among various other results, Hoffman proves that $ Q_n(1)=S_n$, and that $P_n(1)=2^nE_n$ counts some 
objects called $\beta$-snakes that are a superset of type $B$ snakes. A first question we can ask is
to find the meaning of the parameter $t$ in snakes.

The main goal of this article is to give combinatorial models of these derivative polynomials
$P_n$ and $Q_n$ (as well as another sequence $Q_n^{(a)}$ related the with the $a$th power of $\sec$),
in terms of several objects:
\begin{itemize}
 \item Snakes (see Definition~\ref{def_sna}). Besides type $B$ snakes of Arnol'd and $\beta$-snakes of Hoffman
         \cite{Hof}, we introduce another variant.
 \item Cycle-alternating permutations (see Definition~\ref{def_cyc}). These are essentially the image of snakes via
         Foata's fundamental transform \cite{lothaire}.
 \item Weighted Dyck prefixes and weighted Motzkin paths (see Section~\ref{sec:com}). These are two different
         generalizations of some weighted Dyck paths counted by $E_n$.
 \item Increasing trees and forests (see Section~\ref{sectree}). The number $E_{2n+1}$ counts increasing complete
         binary trees, and $E_{2n}$ ``almost complete'' ones, our trees and forests are a generalization of these.
\end{itemize}
Various exponential and ordinary generating functions are proved combinatorially using these objects. They
will present a phenomenon similar to the case of Euler numbers $E_n$:
they are given in terms of trigonometric functions for the exponential generating functions, and in terms
of continued fractions for the ordinary ones. For example, in Theorem~\ref{jfracqnab}, we prove
bijectively a continued fraction for the (formal) Laplace transform of $(\cos z - t \sin z)^{-a}$. While this result
was known analytically since long ago \cite{stieltjes}, we show here that it can be fully understood on the
combinatorial point of view.

\bigskip

This article is organized as follows. 
In Section~\ref{sec:der}, we give first properties of the derivative polynomials, such as recurrence relations, generating
functions, we also introduce a natural $q$-analog and link this with the normal ordering problem.
In Section~\ref{sec:com}, we give combinatorial models of the derivative polynomials
in terms of subset of signed permutations, and obtain the generating functions.
In Section~\ref{sectree}, we give combinatorial models of the derivative polynomials
in terms of increasing trees and forests in two different ways: via the combinatorial theory of differential equations
and via the normal ordering problem.

\section{Derivative polynomials}
\label{sec:der}

\subsection{Definitions}
Generalizing $Q_n(t)$ defined in \eqref{def_pq}, we consider $Q^{(a)}_n(t)$ such that
\begin{equation} \label{def_qa}
\diff[n]{}{x} \sec^ax = Q^{(a)}_n(\tan x) \sec^a x.
\end{equation}
Besides $Q_n$ which is the case $a=1$ in $Q^{(a)}_n$, of particular interest will be
the case $a=2$ denoted by $R_n=Q_n^{(2)}$.
Since $\sec^2x$ is the derivative of $\tan x$, it follows that there is the simple relation
$P_{n+1}(t)=(1+t^2)R_n(t)$.

By differentiation of \eqref{def_pq} and \eqref{def_qa}, one obtains the recurrence relations:
\begin{equation} \label{rec_pqa} \begin{split}
P_{n+1} & = (1+t^2)P_n', \qquad\qquad  Q^{(a)}_{n+1}= (1+t^2)\diff{}{t}Q^{(a)}_n + atQ_n, \\
\end{split}
\end{equation}
together with $P_0=t$ and $Q^{(a)}_0=1$. Let us give other elementary properties which are
partly taken from \cite{Hof}.

\begin{prop} \label{elem}
The polynomial $Q^{(a)}_n$ has the same parity as $n$, {\it i.e.}
$Q^{(a)}_n(-t)=(-1)^nQ^{(a)}_n(t)$, and $P_n$ has different parity.
We have:
\begin{equation}
P_n(1) = 2^n E_n, \qquad Q_n(1)=S_n, \qquad R_n(1)=2^nE_{n+1}.
\end{equation}
(Recall that $Q_n=Q_n^{(1)} $ and $R_n=Q^{(2)}_n$.) The generating functions are:
\begin{align}
\sum_{n=0}^\infty P_n(t) \frac{z^n}{n!} &= \frac{\sin z + t\cos z}{\cos z - t\sin z}, \label{genpq}
& \sum_{n=0}^\infty Q^{(a)}_n(t) \frac{z^n}{n!} = \frac1{(\cos z - t \sin z)^a}.
\end{align}
\end{prop}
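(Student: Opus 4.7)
The plan is to carry out the three claims (parity, generating functions, values at $t=1$) in that order, using the recurrences and angle-sum formulas.

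First, I would establish the parity statement by induction on $n$ from the recurrences \eqref{rec_pqa}. The base cases $P_0(t)=t$ (odd) and $Q^{(a)}_0(t)=1$ (even) are immediate. For the inductive step for $Q^{(a)}_n$: assuming $Q^{(a)}_n(-t)=(-1)^nQ^{(a)}_n(t)$, differentiating in $t$ gives $(Q^{(a)}_n)'(-t)=(-1)^{n-1}(Q^{(a)}_n)'(t)$; since $1+t^2$ is even and $at$ is odd, substitution into the recurrence yields $Q^{(a)}_{n+1}(-t)=(-1)^{n+1}Q^{(a)}_{n+1}(t)$. The inductive step for $P_n$ is analogous.

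Second, I would derive the exponential generating functions by a Taylor-shift argument. From \eqref{def_qa}, the function $x\mapsto\sec^a x$ has $n$-th derivative equal to $Q^{(a)}_n(\tan x)\sec^a x$, so for any fixed $x_0$ the Taylor expansion at $x_0$ gives
\[
\sec^a(x_0+z)=\sum_{n\ge 0}Q^{(a)}_n(\tan x_0)\sec^a(x_0)\,\frac{z^n}{n!}.
\]
Dividing by $\sec^a(x_0)$ and using the angle-sum formula $\cos(x_0+z)=\cos x_0\cdot(\cos z-\tan x_0\sin z)$, so that $\sec^a(x_0+z)/\sec^a(x_0)=(\cos z-t\sin z)^{-a}$ with $t:=\tan x_0$, one obtains the claimed generating function for $Q^{(a)}_n$. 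The same Taylor-expansion trick applied to $\tan(x_0+z)$, combined with $\sin(x_0+z)/\cos x_0=\sin z+t\cos z$ and $\cos(x_0+z)/\cos x_0=\cos z-t\sin z$, yields the generating function for $P_n$.

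Third, I would read off the values at $t=1$ from the generating functions. The case $Q_n(1)=S_n$ is immediate, since setting $t=1$ in $(\cos z-t\sin z)^{-a}$ with $a=1$ recovers $1/(\cos z-\sin z)$, the defining generating function of the Springer numbers. For $P_n(1)=2^nE_n$, I would use the classical identity $\sec x+\tan x=\tan(x/2+\pi/4)$ (immediate from $(1+\sin x)/\cos x$ together with the tangent addition formula), which rewrites $\sum_n E_n(2z)^n/n!=\tan(z+\pi/4)$; one then verifies directly via angle addition that $(\sin z+\cos z)/(\cos z-\sin z)=\tan(z+\pi/4)$. Finally, the relation $P_{n+1}(t)=(1+t^2)R_n(t)$ evaluated at $t=1$ gives $R_n(1)=P_{n+1}(1)/2=2^nE_{n+1}$. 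The only non-mechanical step is the small trigonometric manipulation needed to convert $(\sin z+\cos z)/(\cos z-\sin z)$ into the generating function of $2^nE_n$; everything else is a direct consequence of the definitions and angle-sum formulas.
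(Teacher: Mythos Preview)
Your proof is correct and uses essentially the same approach as the paper for the generating functions: Taylor-expand $\sec^a(x_0+z)$ (respectively $\tan(x_0+z)$) at $x_0$ and apply the angle-sum formula for cosine to substitute $t=\tan x_0$. The paper in fact only writes out this argument for $Q_n^{(a)}$ and defers the parity statement, the generating function for $P_n$, and the special values at $t=1$ to Hoffman's reference, whereas you supply these details yourself (induction from the recurrences for parity, and the identity $\sec x+\tan x=\tan(x/2+\pi/4)$ together with $P_{n+1}=(1+t^2)R_n$ for the values at $t=1$); these additions are all correct.
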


\begin{proof}
The generating function of $\{Q^{(a)}_n\}_{n\geq0}$  is not present in the reference \cite{Hof}, but can be obtained
in the same way as in the particular case $a=1$ (which is in \cite{Hof}). By a Taylor expansion and a
trigonometric addition formula, we have:
\begin{equation}
\sum_{n=0}^\infty Q^{(a)}_n(\tan u)  \sec^a u \frac{z^n}{n!} = \sec^a(u+z) = \frac{ \sec^a u }{(\cos z - \tan u \sin z)^a},
\end{equation}
We can divide on both sides by $\sec^a u$, let $t=\tan u$, and the result follows.
\end{proof}

These exponential generating functions will be obtained combinatorially in Section
\ref{sec:com} using classes of signed permutations. It is interesting to note that $P(z,t)=\sum P_n z^n/n! $
is a Möbius transformation as a function of $t$, in such a way that $z\mapsto ( t\mapsto P(z,t) )$
is a group homomorphism from $\mathbb{R}$ to the elliptic Möbius transformations of $\mathbb{C}$
fixing $i$ and $-i$. This has an explanation through the fact that
\begin{equation}
 P(z,t) = \tan( \arctan t + z  ),
\end{equation}
and consequently, as observed in \cite{Hof}:
\begin{equation}
 P(z,P(z',t)) = P(z+z',t),
\end{equation}
which is the concrete way to say that $z\mapsto ( t\mapsto P(z,t) )$ is a group homomorphism.


\subsection{Operators and $q$-analogs}
Let $D$ and $U$ be operators acting on polynomials in the variable $t$ by 
\begin{equation} \label{def_du}
D(t^n) = [n]_qt^{n-1}, \qquad U(t^n) = t^{n+1},
\end{equation}
where $[n]_q=\frac{1-q^n}{1-q}$. The first one is known as the {\it $q$-derivative} or 
{\it Jackson derivative}, and an important relation is $DU-qUD=I$, where $I$ is the identity.

\begin{defn}
Our $q$-analogs of the polynomials $Q_n$ and $R_n$ are defined by:
\begin{equation} \label{def_qr}
Q_n(t,q) = (D+UDU)^n 1, \qquad R_n(t,q) = (D+DUU)^n1.
\end{equation}
\end{defn}

Of course, $1$ should be seen as $t^0$ since the operators act on polynomials in $t$.

\begin{prop}
We have 
\begin{equation}
Q_n(-t,q)=(-1)^nQ_n(t,q), \qquad R_n(-t,q)=(-1)^nR_n(t,q),  
\end{equation}
moreover $Q_n(t,q)$ and $R_n(t,q)$ are 
polynomials in $t$ and $q$ with nonnegative coefficients such that $Q_n(t,1)=Q_n(t)$ and $R_n(t,1)=R_n(t)$. 
\end{prop}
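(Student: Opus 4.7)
The plan is a routine three-part verification, matching the three claims of the proposition. For the parity statement, note that $D(t^k)=[k]_q t^{k-1}$ and $U(t^k)=t^{k+1}$, so each of $D$ and $U$ swaps the spaces of even and odd polynomials in $t$; hence $UDU$ and $DUU$ reverse parity (being products of three parity-reversing maps), and so do the sums $D+UDU$ and $D+DUU$. Applying either operator $n$ times to $1$ (which is even) therefore produces a polynomial of parity $(-1)^n$, giving the desired identities for $Q_n(t,q)$ and $R_n(t,q)$. The nonnegativity of coefficients is equally immediate: since $[k]_q=1+q+\cdots+q^{k-1}\in\mathbb{N}[q]$, both $D$ and $U$ preserve the cone $\mathbb{N}[t,q]$ of polynomials with nonnegative integer coefficients, and hence so does any sum or composition of them; starting from $1\in\mathbb{N}[t,q]$ then forces $Q_n(t,q),R_n(t,q)\in\mathbb{N}[t,q]$.

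For the specialization at $q=1$, the $q$-derivative $D$ degenerates to the ordinary partial derivative $\partial_t$. A direct computation using $\partial_t\circ t=t\circ\partial_t+1$ gives
\begin{equation*}
(D+UDU)\big|_{q=1} = \partial_t + t\,\partial_t\, t = t + (1+t^2)\partial_t,
\qquad
(D+DUU)\big|_{q=1} = \partial_t + \partial_t\, t^2 = 2t + (1+t^2)\partial_t.
\end{equation*}
Comparing with the recurrence~\eqref{rec_pqa} specialized to $a=1$ and $a=2$, these are exactly the raising operators $Q_n\mapsto Q_{n+1}$ and $R_n\mapsto R_{n+1}$. Since $Q_0(t,1)=R_0(t,1)=Q_0(t)=R_0(t)=1$, induction on $n$ yields $Q_n(t,1)=Q_n(t)$ and $R_n(t,1)=R_n(t)$.

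There is no substantive obstacle in this argument; the only point demanding any attention is checking that the commutator $[\partial_t,t]=1$ generates exactly the coefficient $at$ of $Q_n$ appearing in the two recurrences, with $a=1$ and $a=2$ respectively. This is precisely what makes the asymmetric placements $UDU$ (for $Q$) and $DUU$ (for $R$) the correct $q$-analogs, and why one cannot interchange them.
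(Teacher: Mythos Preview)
Your proof is correct. The paper takes a slightly different organizational route: it first uses the commutation relation $DU-qUD=I$ to rewrite the operators as $D+UDU=(I+qU^2)D+U$ and $D+DUU=(I+q^2U^2)D+(1+q)U$, thereby obtaining the $q$-recurrences
\[
Q_{n+1}(t,q)=(1+qt^2)D\big(Q_n(t,q)\big)+tQ_n(t,q),\qquad
R_{n+1}(t,q)=(1+q^2t^2)D\big(R_n(t,q)\big)+(1+q)tR_n(t,q),
\]
and then deduces all three claims by induction on $n$ from these. Your argument is more direct for the parity and nonnegativity parts, handling them at the operator level without ever writing down the recurrences; for the $q=1$ specialization you in effect derive the paper's recurrences specialized at $q=1$ and match them against~\eqref{rec_pqa}, which is the same idea. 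The paper's route has the side benefit that the general-$q$ recurrences are recorded for later use, but as a proof of the proposition itself your approach is at least as clean.
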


\begin{proof}
From $DU-qUD=I$, we can write 
\begin{equation} \label{relpq}
D+UDU = (I+qU^2)D + U, \qquad D+DUU = (I+q^2U^2)D + (1+q)U,
\end{equation}
then from \eqref{def_qr} and \eqref{relpq} it follows that
\begin{align}
Q_{n+1}(t,q) &= (1+qt^2)   D\big( Q_n(t,q) \big) + tQ_n(t,q),       \label{rec_Q}   \\
R_{n+1}(t,q) &= (1+q^2t^2) D\big( R_n(t,q) \big) + (1+q)tR_n(t,q),  \label{rec_R}
\end{align}
which generalize the recurrences for $Q_n$ and $R_n$ ($a=1$ and $a=2$ in \eqref{rec_pqa}). The
elementary properties given in the proposition can be proved recursively using \eqref{rec_Q} and \eqref{rec_R}.
\end{proof}

The fact that $Q_n(t,q)$ and $R_n(t,q)$ have simple forms in terms of $D$ and $U$ makes a link 
with {\it normal ordering} \cite{Bla}. Given an expression $f(D,U)$ of $D$ and $U$, the problem
is to find some coefficients $c_{i,j}$ such that we have the {\it normal form}:
\begin{equation} \label{normalorder}
   f(D,U) = \sum_{i,j\geq0} c_{i,j} U^iD^j.
\end{equation}
When $f(D,U)$ is polynomial, using the commutation relation $DU-qUD=I$ we can always find such coefficients
and only finitely of them are non-zero. For example, this is what we have done in \eqref{relpq}.
In Section~\ref{sectree}, we will use some general results on normal ordering to give combinatorial
models of $P_n$ and $Q_n$ (when $q=1$).

What is interesting about normal ordering is that having the coefficients $c_{i,j}$ give more insight on the quantity
$f(D,U)$. For example, consider the case of $Q_n(t,q)$ and let $f(D,U)=(D+UDU)^n$, then we have
\begin{equation}
  Q_n(t,q) =  (D+UDU)^n1 = \sum_{i,j\geq0} c_{i,j} U^iD^j1 =  \sum_{i\geq0} c_{i,0}t^i,
\end{equation}
{\it i.e.} we can directly obtain the coefficients of $Q_n(t,q)$ through this normal form.

\subsection{Continued fractions}
Let $D_1$ and $U_1$ be the matrices of operators $D$ and $U$ in the basis 
$\{t^i\}_{i\in\mathbb{N}}$. Moreover, let $W_1$ be the row vector $(t^i)_{i\in\mathbb{N}}$
and $V_1$ be the column vector $(\delta_{i0})_{i\in\mathbb{N}}$ where we use Kronecker's
delta. We have $W_1=(1,t,t^2,\dots)$, and:
\begin{equation}
  D_1=\left(\begin{mmatrix}
    0 & [1]_q  &   &  (0) \\
      & 0 &  [2]_q     \\
      &   & 0   &  \bla \\ 
   (0)   &   &  &  \bla \\
  \end{mmatrix}\right), \qquad
  U_1=\left(\begin{mmatrix}
    0 &  &  &  (0) \\
    1 & 0 &       \\
      & 1 & 0 &  \\
   (0)&   & \bla  & \bla  \\
  \end{mmatrix}\right), \qquad
  V_1 = \left(\begin{mmatrix}
     1 \\
     0 \\
     0 \\
    \vdots \\
  \end{mmatrix}\right).
\end{equation}
From the previous definitions, it follows that:
\begin{align}
&  Q_n(t,q) = W_1(D_1+U_1D_1U_1)^nV_1, \qquad R_n(t,q) = W_1(D_1+D_1U_1U_1)^nV_1, \label{rel1}  \\
&  D_1U_1-qU_1D_1=I, \qquad W_1U_1=tW_1, \qquad D_1V_1=0. \label{rel2}
\end{align}
The point of writing this is that from \eqref{rel1} we only need the relations in \eqref{rel2}
to calculate $Q_n(t,q)$ and $R_n(t,q)$. Even more, if $D_2$, $U_2$, $W_2$, and $V_2$ are a 
second set of matrices and vectors satisfying relations similar to \eqref{rel2}, we also have
$Q_n(t,q) = W_2(D_2+U_2D_2U_2)^nV_2$ and $R_n(t,q) = W_2(D_2+D_2U_2U_2)^nV_2$.
Indeed, when we have the relation \eqref{normalorder} for a given $f(D,U)$, the coefficients
$c_{i,j}$ are obtained only using the commutation relation, so the same identity also holds
with either $(D_1,U_1)$ or $(D_2,U_2)$. We can take
$W_2=(1,0,0,\dots)$, $V_2=V_1$, $D_2=D_1$, and:
\begin{equation}
  U_2=\left(\begin{mmatrix}
    t &  &  &  (0) \\
    1 & tq &       \\
      & 1 & tq^2 &  \\
   (0)&   & \bla  & \bla  \\
  \end{mmatrix}\right).
\end{equation}
Then we have indeed $D_2U_2-qU_2D_2=I$, $W_2U_2=tW_2$, and $D_2V_2=0$. What is nice 
about this second set of matrices and vectors is that it enables us to make a link 
with continued fractions.

\begin{defn} \label{def_jfra}
For any two sequences $\{b_h\}_{h\geq0}$ and $\{\lambda_h\}_{h\geq1}$, let 
$\mathfrak{J}(b_h,\lambda_h)$ denote
\begin{equation} 
\mathfrak{J} (b_h,\lambda_h) =  
 \cfrac{1}{1 - b_0z - \cfrac{\lambda_1 z^2}{1 - b_1z -  \cfrac{\lambda_2z^2}{\ddots}}}.
\end{equation}
We will always use $z$ as variable and $h$ as index of the two sequences so that there should
be no ambiguity in the notation.
\end{defn}

These are called $J$-fractions, or Jacobi continued fractions. They are linked 
with moments of formal orthogonal polynomials, but let us give the results for 
$Q_n(t,q)$ and $R_n(t,q)$ before more details about this.

\begin{prop} \label{frac_pq}
We have:
\begin{equation}
\sum_{n=0}^\infty Q_n(t,q) z^n = \mathfrak{J}(b_h^Q, \lambda_h^Q), \qquad
\sum_{n=0}^\infty R_n(t,q) z^n = \mathfrak{J}(b_h^R, \lambda_h^R),
\end{equation}
where:
\begin{align}
b_h^Q &= tq^h([h]_q+[h+1]_q) , \qquad  &  b_h^R &= tq^h(1+q)[h+1]_q,  \\
\lambda_h^Q &= (1+t^2q^{2h-1}) [h]_q^2, \quad & \lambda_h^R &= (1+t^2q^{2h})[h]_q[h+1]_q.
\end{align}
\end{prop}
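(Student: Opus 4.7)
The plan is to exploit the alternative realization $(D_2, U_2, W_2, V_2)$ that the paper has just introduced. Since $W_2 = (1, 0, 0, \dots)$ and $V_2 = (1, 0, 0, \dots)^T$, the identities stated before the proposition give
\begin{equation*}
Q_n(t,q) = \bigl[ (M^Q)^n \bigr]_{0,0}, \qquad R_n(t,q) = \bigl[ (M^R)^n \bigr]_{0,0},
\end{equation*}
where I abbreviate $M^Q := D_2 + U_2 D_2 U_2$ and $M^R := D_2 + D_2 U_2 U_2$. Equivalently, as formal power series in $z$,
\begin{equation*}
\sum_{n \ge 0} Q_n(t,q)\, z^n = \bigl[ (I - zM^Q)^{-1} \bigr]_{0,0}, \qquad \sum_{n \ge 0} R_n(t,q)\, z^n = \bigl[ (I - zM^R)^{-1} \bigr]_{0,0}.
\end{equation*}

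The substance of the proof is then to write $M^Q$ and $M^R$ explicitly on the basis $\{e_i\}_{i \ge 0}$ and read off their entries. Using only $D_2 e_i = [i]_q e_{i-1}$ and $U_2 e_i = t q^i e_i + e_{i+1}$, a short direct calculation gives
\begin{align*}
M^Q e_i &= (1 + t^2 q^{2i-1})\,[i]_q\, e_{i-1} + t q^i \bigl([i]_q + [i+1]_q\bigr) e_i + [i+1]_q\, e_{i+1}, \\
M^R e_i &= (1 + t^2 q^{2i})\,[i]_q\, e_{i-1} + t q^i (1+q)\, [i+1]_q\, e_i + [i+2]_q\, e_{i+1},
\end{align*}
the second line obtained by first expanding $U_2^2 e_i = t^2 q^{2i} e_i + tq^i(1+q) e_{i+1} + e_{i+2}$ and then hitting it with $D_2$; the apparent $q^{-1}$ at $i = 0$ is killed by $[0]_q = 0$. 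Both matrices are thus tridiagonal, the $i$th diagonal entry of $M^Q$ is exactly $b_i^Q$ and that of $M^R$ is $b_i^R$, while the products of the two off-diagonal entries connecting levels $h-1$ and $h$ are respectively $[h]_q \cdot (1 + t^2 q^{2h-1})[h]_q = \lambda_h^Q$ and $[h+1]_q \cdot (1 + t^2 q^{2h})[h]_q = \lambda_h^R$.

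The conclusion follows from the classical identity (in the combinatorial form popularized by Flajolet) that the $(0,0)$ entry of $(I - zM)^{-1}$ for a tridiagonal matrix $M$ with diagonal $(b_h)$ and off-diagonal products $(\lambda_h)$ equals $\mathfrak{J}(b_h, \lambda_h)$; this comes from expanding $\bigl[(M^n)\bigr]_{0,0}$ as a weighted sum over Motzkin paths of length $n$ from $0$ to $0$. The only place any care is required is the bookkeeping of $q$-powers in the computation of $M^Q e_i$ and $M^R e_i$; apart from that the argument is mechanical.
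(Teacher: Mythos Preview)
Your proof is correct and follows the same route as the paper's: use the second realization $(D_2,U_2,W_2,V_2)$ so that $Q_n(t,q)$ and $R_n(t,q)$ become $(0,0)$-entries of powers of $M^Q$ and $M^R$, verify by direct computation that these matrices are tridiagonal with the stated diagonal entries and off-diagonal products, and then invoke Flajolet's Motzkin-path interpretation to obtain the $J$-fraction. The only difference is that you carry out the tridiagonal computations explicitly, whereas the paper leaves them to the reader (and points to the more general case \eqref{qnatqbhlh}).
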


\begin{proof}
First, observe that for any matrix $M=(m_{i,j})_{i,j\in\mathbb{N}}$, the product $W_2M^nV_2$
is the upper-left coefficient $(M^n)_{0,0}$ of $M^n$. We can obtain this coefficient 
$W_2M^nV_2$ the following way:
\begin{equation} \label{expa}
 W_2M^nV_2 = \sum_{i_1,\dots,i_{n-1} \geq 0} m_{0,i_1}m_{i_1,i_2}\dots m_{i_{n-2},i_{n-1}} 
m_{i_{n-1},0}.
\end{equation}
When the matrix is tridiagonal, we can restrict the sum to indices such that $|i_j-i_{j+1}|\leq1$, so that 
these indices are the successive heights in a Motzkin path.
Then \eqref{expa} shows that $W_2M^nV_2$ is the generating function of Motzkin paths of length $n$
with some weights given by the coefficients of $M$, and using a classical argument \cite{Fla} it 
follows that $\sum_{n=0}^\infty (W_2M^nV_2)z^n = \mathfrak{J}(m_{h,h},m_{h-1,h}m_{h,h-1})$.

In the present case, it suffices to check that $D_2+U_2D_2U_2$ and $D_2+D_2U_2U_2$ are 
tridiagonal and calculate explicitly their coefficients to obtain the result. See also \eqref{qnatqrec}
and \eqref{qnatqbhlh} below for a more general result.
\end{proof}

In particular, it follows from the previous proposition that we have:
\begin{equation}
 Q_{2n}(0,q) = E_{2n}(q), \quad R_{2n}(0,q)=E_{2n+1}(q),
\end{equation}
where $E_{2n}(q)$ and $E_{2n+1}(q)$ are respectively the $q$-secant and $q$-tangent
numbers defined by Han, Randrianarivony, Zeng~\cite{HZR} using continued fractions.

\begin{rem}
From the fact that $Q_n(1)=S_n$ and $R_n(1)=2^{n}E_{n+1}$, the previous proposition implies
\begin{equation}
\sum_{n=0}^\infty S_n z^n = \mathfrak{J}(2h+1,2h^2), \qquad 
\sum_{n=0}^\infty E_{n+1} z^n = \mathfrak{J}(h+1,\tfrac{h(h+1)}2).
\end{equation}
We found only one reference mentioning the latter continued fraction, Sloane's OEIS \cite{OEIS},
but there is little doubt it can be proved by classical methods. For example, using a theorem
of Stieltjes and Rogers \cite[Theorem 5.2.10]{GJ}, the continued fraction can probably be obtained through an addition
formula satisfied by the exponential generating function of $\{E_{n+1}\}_{n\geq0}$, this function being the derivative
of $\tan z + \sec z$, explicitly $(1-\sin z)^{-1}$. Combinatorially, the result can be proved using {\it André trees}
\cite[Section 5]{FS2} and the bijection of Françon and Viennot \cite[Chapter 5]{GJ}. More generally, this shows that if we add a
parameter $x$, then $\mathfrak{J}(h+1,x\tfrac{h(h+1)}2)$ is the generating function of the {\it André polynomials} 
defined by Foata and Schützenberger~\cite{FS2}.
\end{rem}

An important property of $J$-fractions is the link with moments of (formal)
orthogonal polynomials, and we refer to \cite{Fla,Vie} for the relevant combinatorial 
facts. We consider here the {\it continuous dual $q$-Hahn polynomials} $p_n(x;a,b,c|q)$, or 
$p_n(x)$ for short. In terms of the Askey-Wilson polynomials which depend on one other
parameter $d$, $p_n(x)$ is just the specialization $d=0$ (see \cite{KoSw} for the
definitions of these classical sequences, but our notations differ, in particular because
of a rescaling $x\to x/2$). We have the three-term recurrence relation
\begin{equation} \label{rec_cdqh}
  xp_n(x) = p_{n+1}(x) + \left(a+\tfrac 1a - A_n - C_n\right)p_n(x) + A_{n-1}C_np_{n-1}(x),
\end{equation}
together with $p_{-1}(x)=0$ and $p_0(x)=1$, where
\begin{equation} \label{defAnCn}
 A_n =  \tfrac 1a (1-abq^n)(1-acq^n) \quad \hbox{ and } \quad C_n = a (1-q^n)(1-bcq^{n-1}).
\end{equation}
The $p_n(x)$ are orthogonal with respect to the scalar product $(f,g)\mapsto L(fg)$ where
$L$ is the linear form such that 
\begin{equation} \label{mu_frac}
\sum_{n=0}^\infty L(x^n) z^n = \mathfrak{J}\left(a+\tfrac1a-A_h-C_h,A_{h-1}C_h\right).
\end{equation}
The quantity $L(x^n)$ is called the $n$th {\it moment} of the orthogonal sequence $\{p_n(x)\}_{n\geq0}$.
Let $\mu_n(a,b,c)$ denote this $n$th moment of the continuous dual $q$-Hahn polynomials $p_n(x)$.
An elementary calculation shows the following:

%

\begin{prop} We have
\begin{equation} \label{muQR}
Q_n(t,q) = \frac{ \mu_n(i\sqrt q,-i\sqrt q, t)}{(1-q)^n},  \qquad 
R_n(t,q) = \frac{ \mu_n(iq,-iq, t) }{(1-q)^n}.
\end{equation}
\end{prop}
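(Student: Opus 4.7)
The plan is to match the Jacobi continued fraction for $\mu_n(a,b,c)$ given by \eqref{mu_frac} against the $J$-fractions for $Q_n(t,q)$ and $R_n(t,q)$ obtained in Proposition~\ref{frac_pq}, after accounting for the renormalization by $(1-q)^n$. So the whole proof reduces to picking the right specialization of $(a,b,c)$ and verifying that the diagonal and off-diagonal coefficients of the two continued fractions agree.

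The first step is to record the elementary rescaling principle: if $\sum_{n\geq 0}\mu_n z^n = \mathfrak{J}(B_h,\Lambda_h)$, then substituting $z \mapsto z/(1-q)$ level by level in the continued fraction gives
\begin{equation*}
\sum_{n\geq 0}\frac{\mu_n}{(1-q)^n}\,z^n \;=\; \mathfrak{J}\!\left(\frac{B_h}{1-q},\,\frac{\Lambda_h}{(1-q)^2}\right).
\end{equation*}
In view of Proposition~\ref{frac_pq}, the first identity in \eqref{muQR} is therefore equivalent to
\begin{equation*}
b_h^Q \;=\; \frac{a+\tfrac{1}{a}-A_h-C_h}{1-q}, \qquad \lambda_h^Q \;=\; \frac{A_{h-1}C_h}{(1-q)^2},
\end{equation*}
evaluated at $(a,b,c)=(i\sqrt{q},-i\sqrt{q},t)$, and analogously for $R_n$ at $(a,b,c)=(iq,-iq,t)$.

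The remaining check is a direct expansion of the formulas \eqref{defAnCn}. For the $Q_n$ case, the choice $ab=q$ collapses $(1-abq^h)$ into $(1-q^{h+1})=(1-q)[h+1]_q$, while $(1-q^h)=(1-q)[h]_q$ appears inside $C_h$; this naturally produces the factor $(1-q)^2$ in $A_{h-1}C_h$. Since $a+b=0$ and $abc^2 = qt^2$, the factor $(1-acq^{h-1})(1-bcq^{h-1})$ simplifies cleanly to $1+t^2q^{2h-1}$, which is precisely the residual piece of $\lambda_h^Q$. For the diagonal term, $a+\tfrac{1}{a}=i(q-1)/\sqrt{q}$ is purely imaginary, and the imaginary parts of $A_h$ and $C_h$ cancel it exactly, leaving the real expression $tq^h(1-q)([h]_q+[h+1]_q) = (1-q)\,b_h^Q$. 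The $R_n$ case is strictly parallel: the specialization $ab=q^2$, $abc^2=q^2 t^2$ accounts for the extra factor $(1+q)$ in $b_h^R$ and the shift from $q^{2h-1}$ to $q^{2h}$ in $\lambda_h^R$. The only subtle point is tracking the cancellation of imaginary parts in the diagonal coefficient, which is precisely what forces $a$ and $b$ to be conjugate imaginaries; once this is noted, there is no real obstacle and the authors' label ``elementary calculation'' is justified.
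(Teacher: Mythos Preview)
Your proof is correct and follows exactly the approach indicated in the paper: both arguments compare the $J$-fraction expansion of $\sum_n \mu_n(a,b,c)z^n$ from \eqref{mu_frac} with the $J$-fractions for $Q_n(t,q)$ and $R_n(t,q)$ from Proposition~\ref{frac_pq}, after the rescaling $z\mapsto z/(1-q)$. The paper only says ``an elementary calculation shows the following'' and refers to \eqref{qnatqbhlh} for the more general specialization; your write-up simply unpacks that calculation explicitly, and the details (in particular the cancellation of imaginary parts forcing $a=-b$) are all correct.
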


\begin{proof}
We just have to identify the ordinary generating functions of both sides in each identity.
This is possible because we know in each case the explicit form of the $J$-fraction expansion,
from \eqref{mu_frac} and Proposition~\ref{frac_pq}. See also \eqref{qnatqrec}
and \eqref{qnatqmu} below for a more general result.
\end{proof}

This very simple link between our polynomials and the moments $\mu_n(a,b,c)$ is one of the
properties indicating that $Q_n(t,q)$ and $R_n(t,q)$ are interesting $q$-analogs
of $Q_n(t)$ and $R_n(t)$. Note that in view of \eqref{rec_Q} and \eqref{rec_R}, it is tempting
to define a $q$-analog $Q_n^{(a)}(t,q)$ of $Q_n^{(a)}(t)$ by the recurrence:
\begin{equation} \label{qnatqrec}
  Q_{n+1}^{(a)}(t,q) = (1+q^at^2)D\Big( Q_n^{(a)}(t,q) \Big) + [a]_qtQ_n^{(a)}(t,q),
\end{equation}
together with $Q_0^{(a)}(t,q)=1$.
Though we will not study these apart the particular cases $a=1$ and $a=2$, it is worth mentioning
that a natural generalization of the previous proposition holds, more precisely we have:
\begin{equation} \label{qnatqmu}
  Q_n^{(a)}(t,q) = \frac{ \mu_n(iq^{a/2},-iq^{a/2}, t) }{(1-q)^n},
\end{equation}
which can be proved by calculating explicitly the matrix $(I+q^aU_2^2)D_2+[a]_qU_2$. The entries in this
matrix show that the generating function of $\{Q_n^{(a)}(t,q)\}$ has a $J$-fraction expansion with coefficients:
\begin{equation} \label{qnatqbhlh}
 b_h = tq^h\big( [h-1+a]_q + q^{a-1}[h+1]_q\big), \qquad \lambda_h = (1+t^2q^{2h-2+a})[h]_q[h-1+a]_q.
\end{equation}
Indeed we have the same coefficients if we replace $(a,b,c)$ with $(iq^{a/2},-iq^{a/2},t)$ in \eqref{defAnCn} and
\eqref{mu_frac}.

\section{Combinatorial models via signed permutations}
\label{sec:com}

\begin{defn} 
We will denote $[n]=\{1\dots n\}$ and $\llbracket n \rrbracket=\{{-n} \dots {-1}\}\cup\{1 \dots n\}$. A {\it signed permutation}
is a permutation $\pi$ of $\llbracket n \rrbracket$ such that $\pi(-i)=-\pi(i)$ for any $i\in \llbracket n \rrbracket $. It 
will be denoted $\pi=\pi_1\dots\pi_n$ where $\pi_i=\pi(i)$, and $\mathfrak{S}_n^\pm$ is the set of all such $\pi$.
We will also use the cycle notation, indicated by parenthesis: for example $\pi=3,-1,2,4$ is $(1,3,2,-1,-3,-2)(4)(-4)$ in 
cycle notation.
\end{defn}

We consider here two classes of signed permutations: snakes and cycle-alternating permutations. These will
give combinatorial models of the derivative polynomials. As for the $q$-analog, going through other objects (weighted
Dyck prefixes) we can obtain a statistic on cycle-alternating permutations counted by the parameter $q$.

\subsection{Snakes} There are several types of snakes to be distinguished, so let us first give the definition:

\begin{defn} \label{def_sna}
A signed permutation $\pi=\pi_1\dots\pi_n$ is a {\it snake} if
$\pi_1>\pi_2<\pi_3>\dots\pi_n$. We denote by $\mathcal{S}_n\subset\mathfrak{S}_n^\pm$ 
the set of snakes of size $n$. Let $\mathcal{S}^0_n\subset\mathcal{S}_n$ be the 
subset of $\pi$ satisfying $\pi_1>0$, and $\mathcal{S}^{00}_n\subset\mathcal{S}^0_n$ 
be the subset of $\pi$ satisfying $\pi_1>0$ and $(-1)^n\pi_n<0$.
\end{defn}

The objects introduced by Arnol'd~\cite{Arn} are the ones in $\mathcal{S}^0_n$, and
are usually called snakes of type $B$. He shows in particular that the cardinal of 
$\mathcal{S}^0_n$ is the $n$th Springer number $S_n$. The objects in $\mathcal{S}_n$
are called $\beta$-snakes by Hoffman~\cite{Hof}, and he showed that the cardinal of
this set is $P_n(1)=2^nE_n$.

It is practical to take a convention for the values $\pi_0$ and $\pi_{n+1}$.
If $\pi\in\mathcal{S}_n$ (respectively, $\pi\in\mathcal{S}^0_n$, $\pi\in\mathcal{S}^{00}_n$) 
we define $\pi_0=-(n+1)$ and $\pi_{n+1}=(-1)^n(n+1)$ (respectively, 
$\pi_0=0$ and $\pi_{n+1}=(-1)^n(n+1)$, $\pi_0=\pi_{n+1}=0$). This is not
consistent with the inclusions $\mathcal{S}^{00}_n\subset\mathcal{S}^0_n\subset\mathcal{S}_n$,
so we can think of these sets to be disjoint to avoid ambiguity in the sequel. Then an 
element of any of these three sets is a $\pi\in\mathfrak{S}^\pm_n$ such that 
$\pi_0<\pi_1>\pi_2<\pi_3>\dots\pi_{n+1}$. For example in the case of $\mathcal{S}^{00}_n$,
the condition $(-1)^n\pi_n<0$ means that if $n$ is odd, $\pi_n>\pi_{n+1}=0$, and if $n$ is 
even, $\pi_n<\pi_{n+1}=0$. For clarity, we will often write $\pi=(\pi_0),\pi_1,\dots,\pi_n,(\pi_{n+1})$.

\begin{defn}
Let $\pi$ be a snake in one of the sets $\mathcal{S}_n$, $\mathcal{S}^0_n$, or
$\mathcal{S}^{00}_n$, and $\pi_0$, $\pi_{n+1}$ as above. We define a statistic
$\sch(\pi)$ as the number of sign changes through the values $\pi_0\dots\pi_{n+1}$,
{\it i.e.} $\sch(\pi)=\#\{\;i\;|\;0\leq i\leq n, \; \pi_i\pi_{i+1}<0\;\}$.
\end{defn}

Using this statistic, we have combinatorial models of the derivative polynomials.

\begin{thm} \label{comb_snak}
For any $n\geq0$, we have:
\begin{equation}
P_n(t) = \sum_{\pi\in\mathcal{S}_n} t^{\sch(\pi)}, \qquad
Q_n(t) = \sum_{\pi\in\mathcal{S}^0_n} t^{\sch(\pi)}, \qquad
R_n(t) = \sum_{\pi\in\mathcal{S}^{00}_{n+1}} t^{\sch(\pi)}.
\end{equation}
\end{thm}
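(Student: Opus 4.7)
I would prove the three identities simultaneously by induction on $n$, showing that the right-hand sums satisfy the recurrences \eqref{rec_pqa} together with the correct initial values. Set
$$\tilde P_n(t) := \sum_{\pi\in\mathcal{S}_n} t^{\sch(\pi)}, \qquad \tilde Q_n(t) := \sum_{\pi\in\mathcal{S}^0_n} t^{\sch(\pi)}, \qquad \tilde R_n(t) := \sum_{\pi\in\mathcal{S}^{00}_{n+1}} t^{\sch(\pi)}.$$
The base case $n=0$ is immediate from the boundary conventions: $\mathcal{S}_0=\{\emptyset\}$ has boundary $(-1,1)$ giving $\sch=1$, so $\tilde P_0=t=P_0$; $\mathcal{S}^0_0=\{\emptyset\}$ has boundary $(0,1)$ giving $\sch=0$, so $\tilde Q_0=1=Q_0$; and $\mathcal{S}^{00}_1=\{1\}$ with boundary $(0,1,0)$ gives $\tilde R_0=1=R_0$.

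For the inductive step, I would set up an insertion procedure taking a snake $\pi'$ of size $n$ to the various snakes of size $n+1$ in which the new entry has absolute value $n+1$. Since $n+1$ is the largest available absolute value, its sign is determined by the insertion position: it must be $+(n+1)$ at a peak (odd) position and $-(n+1)$ at a valley (even) position, and conversely any such insertion at a position $j\in\{1,\dots,n+1\}$ between two entries that straddle the appropriate ranking always gives a valid snake. Classifying such positions of $\pi'$ into those that lie at a sign-change versus at a non-sign-change, and comparing with the change of the boundary convention in going from $\mathcal{S}_n$ to $\mathcal{S}_{n+1}$, yields an explicit change in $\sch$ at each insertion. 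The sum of $t^{\sch}$ over all possible $(\pi',\text{insertion})$ then evaluates to $(1+t^2)\tilde P_n'(t)$ for type $\mathcal{S}$: a marked sign-change of $\pi'$ can be ``opened'' in two ways (giving the $1+t^2$ factor), while at non-sign-change positions the contributions cancel in pairs. For $\mathcal{S}^0_n$, the left boundary is pinned at $0$, which blocks one of the cancellations and produces the extra $t\tilde Q_n(t)$; for $\mathcal{S}^{00}_{n+1}$, both boundaries are pinned at $0$, producing the extra $2t\tilde R_n(t)$ term.

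The main obstacle will be the careful bookkeeping of $\sch$ under the insertion, because the boundary values change from $(-(n+1),(-1)^n(n+1))$ to $(-(n+2),(-1)^{n+1}(n+2))$: the right boundary flips sign when passing from $\mathcal{S}_n$ to $\mathcal{S}_{n+1}$, and this flip affects $\sch$ by $\pm1$ depending on the sign of $\pi'_n$. I would handle this by fixing once and for all a ``virtual'' right-boundary sign determined by the parity of $n$, and then verifying that the bijection described above, restricted to each value of the sign of $\pi'_j$ and $\pi'_{j+1}$ around the insertion position, contributes the weights $t^{-1},t,t,t^3$ whose aggregate along $\pi'$ produces $(1+t^2)\tilde P_n'(t)$. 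The analogous analyses for $\mathcal{S}^0$ and $\mathcal{S}^{00}$ follow from the same local insertion rules, because the only difference is the boundary convention, whose contribution I can isolate explicitly.
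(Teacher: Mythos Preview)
Your overall strategy---verify the recurrences \eqref{rec_pqa} by induction via an insertion/removal argument---is the same as the paper's. However, your choice to insert the entry of \emph{maximal} absolute value $\pm(n{+}1)$ is the wrong end to work from, and the sketch as written does not go through.

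The basic problem is that ``insertion'' of a new entry at position $j$ into a snake $\pi'\in\mathcal{S}_n$ is \emph{not} a local operation: all positions to the right of $j$ shift by one, so their peak/valley parity flips, and the tail $\pi'_j,\dots,\pi'_n$ no longer alternates in the required direction. Your sentence ``any such insertion \dots\ always gives a valid snake'' is therefore false as stated. One can repair this by negating the tail, but then the map $(\pi',j)\mapsto\pi$ is injective yet not surjective (already $|\mathcal{S}_2\times[3]|=12<16=|\mathcal{S}_3|$), so summing over all $(\pi',j)$ cannot reproduce $\tilde P_{n+1}$. The claim that ``at non-sign-change positions the contributions cancel in pairs'' is left unexplained, and since every contribution is a nonnegative power of $t$ there is no cancellation in the literal sense; I could not find a reading under which this step is correct.

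The paper instead removes the entry of \emph{smallest} absolute value $\pm1$. This is the key idea: because $|\pi_j|=1$ is extremal in absolute value, its two neighbours $\pi_{j-1},\pi_{j+1}$ are forced to have the \emph{same} sign (both $<\pi_j$ or both $>\pi_j$ implies both negative or both positive). Removing $\pm1$, decrementing absolute values, and negating the tail then produces a snake $\pi'$ in which the junction position is automatically a sign change---exactly what is needed to interpret the derivative $P_n'$ as ``mark a sign change''. The two possible signs of the removed entry ($\pi_j=\pm1$) account for the factor $(1+t^2)$. For $\mathcal{S}^0$ and $\mathcal{S}^{00}$ the boundary cases $j=1$ and $j=n{+}1$ supply the extra $tQ_n$ and $2tR_n$ terms, for the same reason you anticipated. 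None of this structure is available around $\pm(n{+}1)$, whose neighbours can have arbitrary signs. I recommend reworking the induction step around the position of $\pm1$ rather than $\pm(n{+}1)$.
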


\begin{proof}
We check that the right-hand sides satisfy the recurrences in \eqref{rec_pqa} (case $a=1$ or $a=2$),
but we only detail the case of $R_n$ (case $a=2$), the other ones being similar. We have $R_0=1$,
and this corresponds to the snake $(0),1,(0)$. Also $R_1=2t$, and this corresponds to
$(0),1,-2,(0)$ and $(0),2,-1,(0)$. Suppose the result is proved for $R_{n-1}$, then to prove 
it for $R_n$ we distinguish three kinds of elements in $\mathcal{S}^{00}_{n+1}$
(the convention is $\pi_0=\pi_{n+2}=0$).
We will denote by $\pi_i^-$ the number $\pi_i-1$ if $\pi_i>0$ and $\pi_i+1$ if $\pi_i<0$.
\begin{itemize}
\item First, suppose that $|\pi_{n+1}|=1$, hence $\pi_{n+1}=(-1)^{n}$.
Let $\pi'=\pi_1^- \dots \pi_{n}^-$. Depending on the parity of $n$, we have either 
$\pi_{n}<\pi_{n+1}=1$ or $\pi_{n}>\pi_{n+1}=-1$, and it follows on one hand that 
$(-1)^n\pi_n<0$, on the other hand that $\pi_n\pi_{n+1}<0$.
Thus $\pi'\in\mathcal{S}^{00}_n$, and $\sch(\pi')=\sch(\pi)-1$. The map $\pi\mapsto\pi'$
is bijective, and with the recurrence hypothesis, it comes that the set of 
$\pi\in\mathcal{S}^{00}_{n+1}$ with $|\pi_{n+1}|=1$ is counted by $tR_{n-1}$.

\item Secondly, suppose that $\pi_1=1$, and let $\pi'=-\pi_2^-,\dots,{-\pi_{n+1}^-}$.
From $\pi_1>\pi_2$, we obtain $\pi_2<0$, hence $-\pi_2^->0$. It follows that 
$\pi'\in\mathcal{S}^{00}_n$. Moreover, since $\pi_1\pi_2<0$ we obtain 
$\sch(\pi')=\sch(\pi)-1$. With the recurrence hypothesis, it comes that the set of 
$\pi\in\mathcal{S}^{00}_{n+1}$ with $\pi_1=1$ is also counted by $tR_{n-1}$.

\item Thirdly, suppose that there is $j\in\{2\dots n\}$ such that $|\pi_j|=1$. We have either
$\pi_{j-1}>\pi_j<\pi_{j+1}$ or $\pi_{j-1}<\pi_j>\pi_{j+1}$, and it follows that 
$\pi_{j-1}$ and $\pi_{j+1}$ have the same sign. We will obtain the term $R_{n-1}'$ in
the subcase where $\pi_j$ have also the same sign as $\pi_{j-1}$ and $\pi_{j+1}$, and the
term $t^2R_{n-1}'$ otherwise. Let us prove the first subcase, the second will follow since
it suffices to consider snakes of the first subcase where $\pi_j$ is replaced with $-\pi_j$.

Let $\pi'=\pi_1^-,\dots,\pi_{j-1}^-,-\pi_{j+1}^-,\dots,{-\pi_{n+1}^-}$. Then we can check that
$\pi\mapsto(\pi',j)$ is a bijection between $\pi$ of the first subcase, and couples
$(\pi',j)$ where $\pi'\in\mathcal{S}^0_n$ and $j\in\{2\dots n\}$ is such that $\pi'_{j-1}\pi'_{j}<0$.
At the level of generating function, choosing a sign change in $\pi'$ is done by differentiation with
respect to $t$, and this explains why $\pi$ of the first subcase are counted by $R'_{n-1}$.
\end{itemize}
Adding the above terms, we obtain $(1+t^2)R'_{n-1}+2tR_{n-1} = R_n$. This completes the recurrence.
The difference in the case of $Q_n$ (respectively, $P_n$) is roughly that only the second and third points
(respectively, only the third point) are to be considered.
\end{proof}

The exponential generating function of $\{Q_n(t)\}_{n\geq0}$ can be directly obtained from 
snakes in the form 
\begin{equation} \label{expQ}
 \sum_{n=0}^\infty Q_n(t) \frac{z^n}{n!}  = \frac{\sec z}{1-t\tan z}.
\end{equation}
To prove this, let $\pi\in\mathcal{S}^0_n$, and consider its unique factorization $\pi=f_1\dots f_k$, where each
$f_k$ is a maximal non-empty factor such that all its entries have the same sign. Each of these factor has odd
length, except possibly the last one. So it is convenient to think that if the last factor $f_k$ has odd size, there
is also an empty factor $f_{k+1}$ at the end, and consider the new factorization $\pi=f_1\dots f_j$ (with $j=k$ or
$j=k+1$). Note that $\sch(\pi)=j-1$. The factorization proves \eqref{expQ}, because $\pi$ is built by assembling
alternating permutations of odd size $f_1\dots f_{j-1}$, and an alternating permutation of even size $f_j$.

Using the snakes, we can also directly obtain that
\begin{equation} \label{square}
\sum_{n=0}^\infty R_n(t) \frac{z^n}{n!} =
\left( \sum_{n=0}^\infty Q_n(t) \frac{z^n}{n!} \right)^2.
\end{equation}
Indeed, let $\pi=(0),\pi_1\dots\pi_{n+1},(0)$ be among snakes counted by $R_n(t)$, {\it i.e.} $\pi\in\mathcal{S}^{00}_{n+1}$.
There is $j\in\{1\dots n+1\}$ such that $|\pi_j|=n+1$. We define $\pi' = (0),\pi_1\dots\pi_{j-1},(\pi_j)$
and $\pi'' = (0), (-1)^{n}\pi_{n+1},\dots,(-1)^{n}\pi_{j+1}, ((-1)^{n}\pi_j)$.
After some relabelling, these define two snakes in $\mathcal{S}^0_{j-1}$ and $\mathcal{S}^0_{n-j+1}$.  
We have $\sch(\pi)=\sch(\pi')+\sch(\pi'')$, and $\pi$ is built by assembling $\pi'$ and $\pi''$, so this
proves \eqref{square}.

Another interesting question concerning snakes is the following. For a given permutation 
$\sigma=\sigma_1\dots\sigma_n\in\mathfrak{S}_n$, the problem is to choose signs
$\epsilon=\epsilon_1\dots\epsilon_n\in\{\pm1\}^n$ such that
$\pi=\epsilon_1\sigma_1,\dots,\epsilon_n\sigma_n$ is a snake. Arnol'd~\cite{Arn} described
the possible choices such that $\pi\in\mathcal{S}^0_n$ in terms of ascent and descent
sequences in the permutation. Let $i\in\{2\dots n-1\}$, then a case-by-case argument 
from \cite{Arn} show that:
\begin{equation} \label{signs}
\parbox{13cm}{
\begin{itemize}
\item if $\sigma_{i-1}<\sigma_i<\sigma_{i+1}$, then $\epsilon_{i}\neq\epsilon_{i+1}$,
\item if $\sigma_{i-1}>\sigma_i>\sigma_{i+1}$, then $\epsilon_{i-1}\neq\epsilon_{i}$,
\item if $\sigma_{i-1}>\sigma_i<\sigma_{i+1}$, then $\epsilon_{i-1}=\epsilon_{i+1}$.
\end{itemize}
}
\end{equation}
Following this, we can answer the problem of building a snake from a permutation in terms of some permutations statistics. 
We can apply this argument to the three sets $\mathcal{S}_n$, $\mathcal{S}^0_n$,  and $\mathcal{S}^{00}_n$, essentially 
by varying the conventions on $\sigma(0)$ and $\sigma(n+1)$.

\begin{defn}
Let $\sigma=\sigma_1\dots\sigma_n\in\mathfrak{S}_n$, we make the convention that 
$\sigma_0=\sigma_{n+1}=n+1$. We also need to consider other conventions, so let 
$\mathfrak{S}^0_n$ and $\mathfrak{S}^{00}_n$ be ``copies'' of the set $\mathfrak{S}_n$, where:
\begin{itemize}
  \item If $\sigma\in\mathfrak{S}^{0}_n$, we set $\sigma_0=0$ and $\sigma_{n+1}=n+1$.
  \item If $\sigma\in\mathfrak{S}^{00}_n$, we set $\sigma_0=\sigma_{n+1}=0$.
\end{itemize}
Let $\sigma$ be in one of the sets $\mathfrak{S}_n$, $\mathfrak{S}^0_n$, or $\mathfrak{S}^{00}_n$.
An integer $i\in[n]$ is 
a {\it valley} of $\sigma$ if $\sigma_{i-1}>\sigma_i<\sigma_{i+1}$, 
a {\it peak} if $\sigma_{i-1}<\sigma_i>\sigma_{i+1}$, 
a {\it double descent} if $\sigma_{i-1}>\sigma_i>\sigma_{i+1}$, and 
a {\it double ascent} if $\sigma_{i-1}<\sigma_i<\sigma_{i+1}$.
Let $\va(\sigma)$ denote the number of valleys in $\sigma$,
    $\pk(\sigma)$ denote the number of peaks in $\sigma$,  
and $\dda(\sigma)$ denote the number of double descents and double ascents in $\sigma$.
\end{defn}

For example, we see $1,3,2\in\mathfrak{S}^0_3$ as $(0),1,3,2,(4)$. Then $3$ is a valley.
On the other hand, we see $1,3,2\in\mathfrak{S}^{00}_3$ as $(0),1,3,2,(0)$. Then $3$ is a
double descent. 

\begin{prop} \label{propval}
When $n\geq1$, we have:
\begin{align}
P_n(t) &= \sum_{\sigma\in\mathfrak{S}_n} t^{\dda(\sigma)}(1+t^2)^{\va(\sigma)}
        = \sum_{\sigma\in\mathfrak{S}^{00}_n} t^{\dda(\sigma)}(1+t^2)^{\pk(\sigma)},  \label{statP}
\\
Q_n(t) &= \sum_{\sigma\in\mathfrak{S}^0_n} t^{\dda(\sigma)}(1+t^2)^{\va(\sigma)}, \qquad
R_n(t)  = \sum_{\sigma\in\mathfrak{S}^{00}_{n+1}} t^{\dda(\sigma)}(1+t^2)^{\va(\sigma)}.
\end{align}
The last two identities are also true when $n=0$.
\end{prop}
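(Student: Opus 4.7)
The plan is to apply Theorem~\ref{comb_snak}, which writes each of $P_n$, $Q_n$, $R_n$ as a $\sch$-weighted enumeration of snakes, and then group the snakes by their underlying unsigned permutation $\sigma := |\pi|$. For a fixed $\sigma$ in the appropriate set ($\mathfrak{S}_n$, $\mathfrak{S}^0_n$, or $\mathfrak{S}^{00}_{n+1}$), whose conventions on $\sigma_0,\sigma_{n+1}$ are precisely chosen to match the boundary values $\pi_0,\pi_{n+1}$ defining the corresponding snake set, the goal is to show that $\sum_{\pi:|\pi|=\sigma} t^{\sch(\pi)}=t^{\dda(\sigma)}(1+t^2)^{\va(\sigma)}$; summing over $\sigma$ then yields the first three formulas.

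To compute the inner sum, one reinterprets \eqref{signs} position by position: the constraints imposed by the gaps on either side of position~$i$ force the sign $\epsilon_i$ to the specific value $\eta_i:=(-1)^{i+1}$ whenever $i$ is a peak, double ascent, or double descent of $\sigma$, and leave $\epsilon_i$ unconstrained when $i$ is a valley. Since two valleys of $\sigma$ cannot be adjacent, all $2^{\va(\sigma)}$ valid sign sequences are obtained from the default assignment $\epsilon_i=\eta_i$ by independently flipping $\epsilon_i$ at any subset of the valleys. In the default, consecutive signs always alternate, so every gap counted by $\sch$ is a sign change; flipping $\epsilon_i$ at a valley reverses the sign-change status of the two gaps neighbouring~$i$, and in each of the three conventions both of these gaps are counted by $\sch$, so each flip lowers $\sch$ by exactly~$2$. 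Consequently the inner sum equals $t^{N_\sigma-2\va(\sigma)}(1+t^2)^{\va(\sigma)}$, where $N_\sigma$ denotes the number of gaps counted by $\sch$.

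The remaining identity $N_\sigma-2\va(\sigma)=\dda(\sigma)$ is established by a short run count in the extended sequence $\sigma_0,\dots,\sigma_{n+1}$: ascending and descending runs alternate, and counting transitions gives $\pk(\sigma)=\va(\sigma)-1$ in the $\mathfrak{S}_n$ case (extreme gaps descending then ascending), $\pk(\sigma)=\va(\sigma)$ in the $\mathfrak{S}^0_n$ case (both extreme gaps ascending), and $\pk(\sigma)=\va(\sigma)+1$ in the $\mathfrak{S}^{00}_{n+1}$ case (ascending then descending). Combined with $\pk(\sigma)+\va(\sigma)+\dda(\sigma)$ being the number of positions and with $N_\sigma$ equal to $n+1$, $n$, $n$ respectively, this yields the identity. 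Matching these boundary counts is the most delicate piece of bookkeeping.

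Finally, the alternative formula $P_n=\sum_{\sigma\in\mathfrak{S}^{00}_n} t^{\dda(\sigma)}(1+t^2)^{\pk(\sigma)}$ is obtained from the $\mathfrak{S}_n$-formula by the complementation bijection $\sigma\mapsto(n+1-\sigma_1,\dots,n+1-\sigma_n)$, which sends $\mathfrak{S}^{00}_n$ bijectively onto $\mathfrak{S}_n$ (turning the boundary zeros into $(n+1)$'s), swaps ascents with descents and hence peaks with valleys, and preserves $\dda$. The $n=0$ cases of the $Q$- and $R$-formulas reduce to the direct checks $Q_0(t)=R_0(t)=1$.
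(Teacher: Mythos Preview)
Your proof is correct and follows essentially the same approach as the paper's: both start from Theorem~\ref{comb_snak}, group snakes by their underlying permutation $\sigma=|\pi|$, use \eqref{signs} to analyze the admissible sign sequences, and observe that valleys are the free positions while double ascents/descents force sign changes; the alternative $P_n$-formula is likewise obtained by complementation in both. Your version is more explicit---you make the default assignment $\epsilon_i=(-1)^{i+1}$ and the identity $N_\sigma-2\va(\sigma)=\dda(\sigma)$ fully precise, whereas the paper argues more tersely that each double ascent/descent contributes one sign change and each valley contributes $0$ or $2$---but the underlying argument is the same.
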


\begin{proof}
This follows from Theorem~\ref{comb_snak} and \eqref{signs}, which now can be extended to the cases $i=1$ or
$i=n$ using the conventions on $\sigma_0$ and $\sigma_{n+1}$ (note that we also need to consider $\epsilon_0$ and 
$\epsilon_{n+1}$). Let us first prove the case of $P_n$, $\sigma\in\mathcal{S}_n$ and $\pi\in\mathfrak{S}_n$. We
have $\epsilon_0=-1$ since we require that $\pi_0=-(n+1)$. The first two rules in \eqref{signs} show that to each
double ascent or double descent we can associate a sign change in $\pi$, and that the only remaining choices to
be done concern the valleys. At a valley $\sigma_{i-1}>\sigma_i<\sigma_{i+1}$, we can have either 
$\epsilon_{i-1}=\epsilon_i=\epsilon_{i+1}$, or $\epsilon_{i-1}=-\epsilon_i=\epsilon_{i+1}$, {\it i.e.} $0$ or $2$ sign changes.
This proves the first equality in \eqref{statP}, and the second follows by taking the complement permutation. The cases
of $Q_n$ and $R_n$ are proved similarly, except that we don't consider $\epsilon_0$ and we have $\epsilon_1=1$.
\end{proof}

An important property of the statistics $\va(\sigma)$ and $\dda(\sigma)$ is that they can be followed through the bijection
of Françon and Viennot (see for example the book \cite{GJ}). Omitting details, this gives combinatorial proof of:
\begin{align}
\sum_{n=0}^\infty Q_n(t) z^n &= \mathfrak{J}\big(\; (2h+1)t , \; (1+t^2)h^2 \;\big),  \label{fvfrac1}   \\
\sum_{n=0}^\infty R_n(t) z^n &= \mathfrak{J}\big(\; (2h+2)t , \; (1+t^2)h(h+1) \;\big), \label{fvfrac2}
\end{align}
which is the particular case $q=1$ of Proposition~\ref{frac_pq}. Unfortunately, it
seems difficult to follow the parameter $q$ through this bijection, and obtain
nice statistics on $\mathfrak{S}^{0}_{n}$ and $\mathfrak{S}^{00}_{n+1}$ corresponding
to $Q_n(t,q)$ and $R_n(t,q)$.

\subsection{Cycle-alternating permutations} From the combinatorial interpretation 
in terms of snakes, we can derive other ones using simple bijections. We need some general definitions
concerning signed permutations. Let $\pi\in\mathfrak{S}^{\pm}_n$.

\begin{defn} \label{def_cyc}
Let $\neg(\pi)$ be the number of $i\in[n]$ 
such that $\pi_i<0$. For any orbit $O$ of $\pi$, let $-O=\{-x\;|\;x\in O\}$. A {\it cycle} of $\pi$ is an unordered
pair of orbits $\{O_1,O_2\}$ such that $-O_1=O_2$. It is called a {\it one-orbit cycle} if $O_1=O_2$ and
{\it two-orbit cycle} otherwise. (This is known to be a natural notion of cycle since there is a decomposition
of any signed permutation into a product of disjoint cycles.) For any signed permutation $\pi$, its {\it arch
diagram} is defined as follows: draw on the horizontal axis $2n$ nodes labelled by the integers in
$\llbracket n \rrbracket$ in increasing order, then for any $i$ draw an arch from $i$ to $\pi(i)$ such that
the arch is above the horizontal axis if $i\leq\pi(i)$ and below the axis otherwise.
\end{defn}

See Figure~\ref{arrownot} for an example of an arch diagram. Since the labels from $-n$ to $n$ are in increasing
order we do not need to explicitly write them. See also Figure~\ref{conjug} further for another example.
We did not specify what happens with the fixed points in the arch diagram. Although this will not be really
important in the sequel, we can choose to put a loop \nodl at each positive fixed point and a loop \nodll at
each negative fixed point. This way, the arch diagram of a signed permutation is always centrally symmetric.

\begin{figure}[h!tp] \psset{unit=5mm}
\begin{pspicture}(-7,-1.7)(7,1.7)
\psdots(-1,0)(-2,0)(-3,0)(-4,0)(-5,0)(-6,0)(-7,0)
\psdots(1,0)(2,0)(3,0)(4,0)(5,0)(6,0)(7,0)
\pscurve(1,0)(1.5,0.4)(2,0) \pscurve(-1,0)(-1.5,-0.4)(-2,0)
\pscurve(2,0)(-1,-1.7)(-4,0) \pscurve(-2,0)(1,1.7)(4,0)
\pscurve(3,0)(4.5,1)(6,0)   \pscurve(-3,0)(-4.5,-1)(-6,0)
\pscurve(4,0)(2.5,-0.8)(1,0) \pscurve(-4,0)(-2.5,0.8)(-1,0)
\pscurve(5,0)(1,-1.9)(-3,0)  \pscurve(-5,0)(-1,1.9)(3,0)
\pscurve(6,0)(6.5,0.5)(7,0)  \pscurve(-6,0)(-6.5,-0.5)(-7,0)
\pscurve(7,0)(6,-0.8)(5,0)  \pscurve(-7,0)(-6,0.8)(-5,0)
\end{pspicture}
\caption{The arch diagram of $\pi=2,-4,6,1,-3,7,5$ \label{arrownot}}
\end{figure}
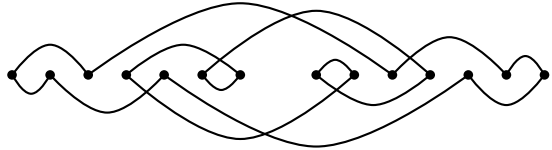

\begin{defn}
Let $\pi\in\mathfrak{S}_n^\pm$. An integer $i\in\llbracket n \rrbracket$ is a 
{\it cycle peak} of $\pi$ if $\pi^{-1}(i)<i>\pi(i)$, and it is a
{\it cycle valley} of $\pi$ if $\pi^{-1}(i)>i<\pi(i)$.
The signed permutation $\pi$ is {\it cycle-alternating} if every 
$i\in \llbracket n \rrbracket $
is either a cycle peak or a cycle valley. Let $\mathcal{C}_n\subset\mathfrak{S}^\pm_n$ denote the subset of 
cycle-alternating signed permutations, and $\mathcal{C}^\circ_n\subset\mathcal{C}_n$ be the subset of 
$\pi\in\mathcal{C}_n$ with only one cycle.
\end{defn}

\begin{lem} 
Let $\pi\in\mathcal{C}^\circ_n$. Then $n$ is even (respectively, odd) if and only if 
$\pi$ has a two-orbit cycle (repectively, a one-orbit cycle).
\end{lem}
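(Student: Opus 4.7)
The plan is to reduce the claim to two easy observations about cycle-alternating signed permutations and then treat the two possible cycle types separately. The first observation is that if $i$ is a cycle peak of $\pi$, then $\pi(i)<i=\pi^{-1}(\pi(i))$, so $\pi(i)$ cannot be a cycle peak and must therefore be a cycle valley (by the cycle-alternating assumption); symmetrically, a cycle valley is sent to a cycle peak. Thus the peak/valley status alternates along every orbit. The second observation is that the relations $\pi(-i)=-\pi(i)$ and $\pi^{-1}(-i)=-\pi^{-1}(i)$ reverse both inequalities appearing in the definitions, so $i$ is a cycle peak if and only if $-i$ is a cycle valley; in particular $i$ and $-i$ always have opposite cycle status.

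With these in hand I would split on the type of the unique cycle of $\pi\in\mathcal{C}^\circ_n$. If the cycle is a one-orbit cycle, the single orbit $O=\llbracket n\rrbracket$ has size $2n$ and satisfies $-O=O$. Picking $i\in O$ and letting $k\in\{1,\dots,2n-1\}$ be minimal with $\pi^k(i)=-i$, the identity $\pi^{2k}(i)=i$ forces $2n\mid 2k$, so $k=n$. The first observation then shows that the status flips $n$ times as one goes from $i$ to $\pi^n(i)=-i$, and the second shows that the two endpoints have opposite status, so $n$ must be odd.

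If instead the cycle is a two-orbit cycle $\{O_1,O_2\}$, then $O_1$ and $O_2$ are disjoint of common size (negation restricts to a bijection $O_1\to O_2$), so $|O_1|=|O_2|=n$ and $\pi^n(i)=i$ for any $i\in O_1$. By the first observation the status flips $n$ times while returning to the same (and therefore same-status) element, forcing $n$ to be even. Since a single cycle is either one-orbit or two-orbit, the two cases are mutually exclusive and exhaust $\mathcal{C}^\circ_n$, which yields both equivalences. The only delicate point is the identification $k=n$ in the one-orbit case; the rest is a short parity count.
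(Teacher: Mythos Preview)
Your proof is correct and follows essentially the same line as the paper's. The paper argues the two-orbit case exactly as you do (alternation forces each orbit to have even size), and for the one-orbit case it simply writes the cycle as $(n,i_1,\dots,i_{n-1},-n,-i_1,\dots,-i_{n-1})$ and invokes alternation; your argument that the minimal $k$ with $\pi^k(i)=-i$ equals $n$ is precisely the justification for this cycle shape that the paper leaves implicit, and your use of a generic $i$ in place of the paper's concrete choice $i=n$ is a cosmetic difference only.
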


\begin{proof}
In a cycle-alternating permutation, each orbit has even cardinal because there is an
alternance between cycle peaks and cycle valleys, so if there are two opposite orbits 
of size $n$, necessarily $n$ is even.

In the cycle-alternating permutation $\pi$, $-n$ is a cycle valley and $n$ is a cycle peak.
If there is one orbit, it can be written 
$\pi=(n,i_1,\dots,i_{n-1},-n,-i_1,\dots,-i_{n-1})$ in cycle notation.
Because of the alternance of cycles peaks and cycle valleys, it follows that $n$ is odd.
\end{proof}

\begin{thm}  \label{comb_cycl}
We have :
\begin{align}
P_n(t) = \sum_{\substack{\pi\in\mathcal{C}^\circ_{n+1} }} t^{\neg(\pi)},
\quad
Q_n(t) = \sum_{\pi\in\mathcal{C}_n} t^{\neg(\pi)},
\quad
R_n(t) = \sum_{\substack{\pi\in\mathcal{C}^\circ_{n+2} \\ \pi_1>0}} 
t^{\neg(\sigma)}.
\end{align}
\end{thm}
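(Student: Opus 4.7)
The plan is to invoke Theorem~\ref{comb_snak} and construct three statistic-preserving bijections:
$\Phi_P\colon \mathcal{S}_n \to \mathcal{C}^\circ_{n+1}$,
$\Phi_Q\colon \mathcal{S}^0_n \to \mathcal{C}_n$, and
$\Phi_R\colon \mathcal{S}^{00}_{n+1} \to \{\pi \in \mathcal{C}^\circ_{n+2} : \pi_1 > 0\}$,
each sending $\sch$ to $\neg$. As announced in the introduction, the natural candidate is a signed analog of Foata's fundamental transformation.

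For $\Phi_Q$, given $\pi \in \mathcal{S}^0_n$, I would scan $\pi_1, \pi_2, \ldots, \pi_n$ from left to right in the total order on $\llbracket n \rrbracket$ and cut the sequence just before each new left-to-right maximum. Each resulting block is interpreted as a cycle of the output signed permutation $\tau$, with the distinction between one-orbit cycles (where the orbit is self-opposite) and two-orbit cycles (where the orbit and its negative are disjoint) determined by the parity of sign changes within the block. The cases $\Phi_P$ and $\Phi_R$ are handled analogously, but the boundary conventions on $\pi_0$ and $\pi_{n+1}$ from Definition~\ref{def_sna} act as virtual leading and trailing entries that force all cycle blocks to merge into a single cycle. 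The correspondence $\sch(\pi) = \neg(\tau)$ is built into the construction: the sign of $\tau(j)$ for $j = |\pi_i|$ is determined by the cumulative parity of sign changes in the prefix of $\pi$ up to position $i$, so each sign change in the snake accounts for exactly one negative image in $\tau$.

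The key steps are the following:
\begin{itemize}
\item Specify the transform precisely, including the rule for identifying one-orbit versus two-orbit cycles and the sign assignment on images.
\item Verify that every element of $\llbracket n \rrbracket$ is a cycle peak or cycle valley in $\tau$, using the alternating pattern of $\pi$ to force alternation within each cycle.
\item Check $\sch(\pi) = \neg(\tau)$ via the parity-of-sign-changes bookkeeping.
\item Construct the inverse, which reads the cycles of $\tau$ in canonical order (each cycle starting from its maximum, cycles ordered by decreasing maxima) to reconstruct $\pi$.
\item For the $P_n$ and $R_n$ cases, verify that the boundary conventions force a single cycle, and in the $R_n$ case the additional condition $\pi_1 > 0$.
\end{itemize}

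The main obstacle is the second step: the signed setting introduces the distinction between one-orbit and two-orbit cycles, which is absent in classical Foata's transform on alternating permutations. An odd number of sign changes within a block forces the corresponding cycle to close via the negative element (a one-orbit cycle), while an even number produces a two-orbit cycle, and in both types every element must remain a cycle peak or cycle valley. This requires a careful case analysis matching the sign-change parity with the snake's alternating pattern of ascents and descents, and showing that left-to-right maxima of the snake correspond exactly to cycle peaks in $\tau$.
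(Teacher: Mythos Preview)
Your overall strategy---deriving the cycle-alternating models from Theorem~\ref{comb_snak} via Foata-type bijections---is exactly the paper's approach. For $P_n$ and $R_n$ the paper's construction is in fact simpler than the ``merged Foata transform'' you sketch: one reads the entire snake, together with its boundary value $\pi_0=\pm(n+1)$ (or the shifted version for $R_n$), directly in cycle notation as a single cycle; no block decomposition is needed, and the parity of $n$ alone decides whether this cycle is one-orbit or two-orbit.

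For $Q_n$, however, several of the specific mechanisms you propose do not work as stated. The paper cuts at the left-to-right maxima of $|\pi_1|,\dots,|\pi_n|$, not of $\pi$ in the order on $\llbracket n\rrbracket$ (with your rule, a snake like $4,-2,-1,-5,3,-6$ would yield a single block, since nothing after $4$ exceeds it). The one-orbit/two-orbit distinction is governed by the \emph{parity of the block length}, not by the parity of sign changes within the block; these disagree already on the block $-5,3$, which has even length (hence two-orbit) but one internal sign change. Your inverse also needs the cycles ordered by \emph{increasing} maxima, else the first block absorbs all left-to-right maxima of $|\pi|$.

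The point you correctly flag as the main obstacle---$\sch(\pi)=\neg(\pi')$---is also where your heuristic breaks down. If the sign of $\tau(|\pi_i|)$ were determined by cumulative parity of sign changes up to position $i$, then $\neg(\tau)$ would count positions with odd cumulative parity, which is not the number of sign changes. The paper's argument is different: for each positive $a$ with $\pi'(a)<0$ it locates a corresponding sign change in $\pi$, either as an adjacent pair $a,-b$ or $-a,b$ inside a block, or (the subtle case) as the sign change forced at a block boundary by the alternation constraint---precisely where the inverse map inserts the adjustment signs $\epsilon_i$ to make the concatenation $f_1,\epsilon_2 f_2,\dots$ alternating.
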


\begin{proof}
There are simple bijections between snakes and cycle-alternating permutations to prove 
this from Theorem~\ref{comb_snak}.

Let us begin with the case of $P_n$, so let $\pi\in\mathcal{S}_n$. Suppose first that $n$ is odd, 
hence $\pi_0=\pi_{n+1}=-(n+1)$. We can read $\pi$ in cycle notation and consider the two-orbit
cycle $\pi'=(\pi_0,\pi_1,\dots,\pi_n)(-\pi_0,-\pi_1,\dots,-\pi_n)$.
Then $\pi'\in\mathcal{C}^\circ_{n+1}$. When $n$ is even, we have $-\pi_0=\pi_{n+1}=n+1$, 
and the bijection is defined by taking $\pi'=(\pi_0,\pi_1,\dots,\pi_n,-\pi_0,-\pi_1,\dots,-\pi_n)$.
The map $\pi\mapsto\pi'$ is clearly invertible, and using the previous lemma, we have a bijection 
between $\mathcal{S}_n$ and $\mathcal{C}^\circ_{n+1}$.
Let $i\in\{0,\dots,n\}$, then $\pi_i\pi_{i+1}<0$ is equivalent to $\pi'(|\pi_i|)=-|\pi_{i+1}|<0$,
and it follows that $\sch(\pi)=\neg(\pi')$. This proves the first equality of the theorem.

The case of $R_n$ is slightly different. In this proof, we will denote by $\pi_i^+$ the number
$\pi_i+1$ if $\pi_i>0$, and $\pi_i-1$ if $\pi_i<0$. Let $\pi\in\mathcal{S}^{00}_{n+1}$. If $n+1$ 
is odd, hence $\pi_{n+1}>0$, we define 
$\pi'=(1,\pi_1^+,\dots,\pi_{n+1}^+)(-1,-\pi_1^+,\dots,-\pi_{n+1}^+)$. If $n+1$ is even, 
hence $\pi_{n+1}<0$, 
we define $\pi'=(1,\pi_1^+,\dots,\pi_{n+1}^+,-1,-\pi_1^+,\dots,-\pi_{n+1}^+  )$. In each case, 
it is easily checked that this defines a bijection from $\mathcal{S}^{00}_{n+1}$ to the subset 
of $\pi\in\mathcal{C}^\circ_{n+2}$ satisfying $\pi_1>0$, so that a change of sign in $\pi$ correspond 
to a $i>0$ with $\pi'(i)<0$.

The case of $Q_n$ requires more attention. Let $\pi\in\mathcal{S}^0_n$, we need to split it into blocks that will
correspond to the cycles of an element $\pi'\in\mathcal{C}_n$. This can be done by a variant of Foata's fundamental
transform \cite{lothaire}. Let $a_1<\dots<a_k$ be the left-to-right maxima of $|\pi_1|\dots|\pi_n|\in\mathfrak{S}_n$
(recall that $u$ is a left-to-right maximum of $\sigma\in\mathfrak{S}_n$ when $\sigma(i)<\sigma(u)$ for any $i<u$).
We consider the factors of $\pi$, $f_1=\pi_{a_1}\dots\pi_{a_2-1}$, $f_2=\pi_{a_2}\dots\pi_{a_3-1}$, up to
$f_k=\pi_{a_k}\dots\pi_{n}$ (we can take the convention $a_{k+1}=n+1$).
We form $\pi'$ by putting together cycles, so that if $f_i$ has odd length it gives a one-orbit cycle
$(\pi_{a_i}\dots\pi_{a_{i+1}-1},-\pi_{a_i}\dots {-\pi_{a_{i+1}-1}} )$, and if it has even length it gives a two-orbit
cycle $(\pi_{a_i}\dots\pi_{a_{i+1}-1})(-\pi_{a_i}\dots {-\pi_{a_{i+1}-1}} )$. The definition of $a_i$ shows that
$\pi_{a_i}$ is greater than $\pi_{a_i+1}\dots\pi_{a_{i+1}-1}$, and consequently these
cycles are indeed alternating. The inverse bijection is easily deduced:
we can write $\pi'\in\mathcal{C}_n$ as a product of cycles $\pi'=c_1\dots c_k$, where the cycles are sorted
so that their maximal entries are increasing. To each $c_i$ we associate a word $f_i$ so that
$c_i$ is either $(f_i,-f_i)$ or $(f_i)(-f_i)$, and such that the first letter of $f_i$ is the maximum entry of $c_i$.
Then we can form the snake $\pi=f_1,\epsilon_2f_2,\dots, \epsilon_kf_k$, where the signs 
$\epsilon_2,\dots,\epsilon_k\in\{\pm1\}$ are the unique ones such that $\pi$ is alternating.

It is in order to give examples of the previous two bijections. If we start from a snake 
$\pi=(0),4,-2,-1,-5,3,-6,(7)\in\mathcal{S}_6$, the left-to-right maxima of $|\pi|$ are $1,4,6$, and this gives 
$\pi'=(4,-2,-1,-4,2,1)(-5,3)(5,-3)(6,-6)\in\mathcal{C}_6$. As for the other direction, let 
$\pi'=(5,-2,3,-5,2,-3)(6,-1)(-6,1)(7,-4)(-7,4)\in\mathcal{C}_7$. It is a product of three cycles so that 
the snake $\pi$ is formed by putting together three words $5,-2,3$, and $6,-1$, and $7,-4$. We need
to change the signs of the second and third words to obtain a snake, which is $\pi=(0),5,-2,3,-6,1,-7,4,(-8)$.

One can check on the examples that in both cases we have $\sch(\pi)=\neg(\pi')$, but this does not
immediately follows the construction and need to be proved now. Let $a,b>0$, such that $\pi'(a)=-b$, 
we can associate a sign change in $\pi$ to each such pair $(a,b)$. The two integers are in the same cycle $c_i$.
If the word $f_i$ is of the form $\dots,a,-b,\dots$ or $\dots,-a,b,\dots$, then either the factor $a,-b$ or 
$-a,b$ appears in the word $\epsilon_if_i$, and consequently appears in $\pi$ as a sign change. The other 
possibility is that $f_i=b,\dots,-a$ which only occur when $f_i$ has even length, and $f_i=b,\dots,a$ which 
only occur when $f_i$ has odd length. If $b$ (respectively $-b$) appear in $\pi$, it is greater (respectively, 
smaller) than his neighbor entries, and after examining a few cases it comes that the same is true for $a$.
It follows that there is a sign change in $\pi$ between the last entry of $\epsilon_i f_i$ and its right neighbor.
This completes the proof.
\end{proof}

Using cycle-alternating permutations, there is a bijective proof 
of the fact that $P_{n+1}=(1+t^2)R_n$. This can be written
\begin{equation} \label{relpq2}
\sum_{\substack{\pi\in\mathcal{C}^\circ_{n+2} }} t^{\neg(\pi)}
= (1+t^2) \sum_{\substack{\sigma\in\mathcal{C}^\circ_{n+2} \\ \sigma_1>0}} 
t^{\neg(\sigma)}.
\end{equation}
Now, consider the conjugation by the transposition $(1,-1)$, {\it i.e.} the map 
that sends $\pi$ to $(1,-1)\pi (1,-1)$. This is a fixed-point-free involution on
 $\mathcal{C}^\circ_{n+2}$
which proves combinatorially \eqref{relpq2}, as is easily seen on the arch diagrams. Indeed, 
in this representation the involution exchanges the dot labelled $1$ with the dot labelled $-1$, 
see Figure~\ref{conjug}. We have $\pi(1)>0$ if and only if its image $\pi'=(1,-1)\pi (1,-1)$
does not satisfy $\pi'(1)>0$, and if $\pi(1)>0$ there holds $\neg(\pi')=\neg(\pi)+2$.
This proves \eqref{relpq2}. Note that it also is possible to prove $P_{n+1}=(1+t^2)R_n$ on
the snakes, using Proposition~\ref{propval}, but this is more tedious.

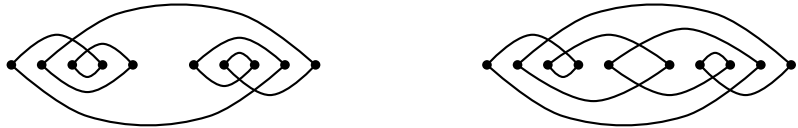
\begin{figure}[h!tp] \psset{unit=4mm}
\begin{pspicture}(-5,-2.5)(5,2.5)
\psdots(-1,0)(-2,0)(-3,0)(-4,0)(-5,0)
\psdots(1,0)(2,0)(3,0)(4,0)(5,0)
\pscurve(-5,0)(-3.5,1)(-2,0)  \pscurve(5,0)(3.5,-1)(2,0)
\pscurve(-3,0)(-2,0.7)(-1,0)    \pscurve(3,0)(2,-0.7)(1,0)
\pscurve(-4,0)(-1.5,1.7)(2.5,1.7)(5,0) \pscurve(4,0)(1.5,-1.7)(-2.5,-1.7)(-5,0)
\pscurve(-4,0)(-2.5,-0.9)(-1,0)   \pscurve(4,0)(2.5,0.9)(1,0)
\pscurve(-3,0)(-2.5,-0.4)(-2,0)   \pscurve(3,0)(2.5,0.4)(2,0)
\end{pspicture}
\hspace{2cm}
\begin{pspicture}(-5,-2.5)(5,2.5)
\psdots(-1,0)(-2,0)(-3,0)(-4,0)(-5,0)
\psdots(1,0)(2,0)(3,0)(4,0)(5,0)
\pscurve(-5,0)(-3.5,1)(-2,0)  \pscurve(5,0)(3.5,-1)(2,0)
\pscurve(-3,0)(-1,1)(1,0)    \pscurve(3,0)(1,-1)(-1,0)
\pscurve(-4,0)(-1.5,1.7)(2.5,1.7)(5,0) \pscurve(4,0)(1.5,-1.7)(-2.5,-1.7)(-5,0)
\pscurve(-4,0)(-1.5,-1.2)(1,0)   \pscurve(4,0)(1.5,1.2)(-1,0)
\pscurve(-3,0)(-2.5,-0.4)(-2,0)   \pscurve(3,0)(2.5,0.4)(2,0)
\end{pspicture}
\caption{\label{conjug}
The arch diagram of the cycle-alternating permutation $\pi=(1$, $4$, $-5$, $-2$, $-3$, $-1$, $-4$, $5$, $2$, $3)
\in\mathcal{C}^\circ_{5}$, and of its conjugate $(1,-1)\pi (1,-1)$.}
\end{figure}

Using the cycle-alternating permutations, we can also obtain the exponential generating
function of $\{P_n\}_{n\geq0}$, knowing the one of $\{Q_n\}_{n\geq0}$. Indeed, a cycle-alternating
permutation is an assembly of cycle-alternating cycles, in the sense of combinatorial
species, and this gives immediately:
\begin{equation}
  \sum_{n=0}^\infty P_n(t) \frac{z^{n+1}}{(n+1)!} 
  = \log\left( \sum_{n=0}^\infty Q_n(t) \frac{z^n}{n!} \right)
  = \log\left(\frac{1}{\cos z - t\sin z} \right),
\end{equation}
and after differentiation:
\begin{equation}
  \sum_{n=0}^\infty P_n(t) \frac{z^n}{n!} = \frac{\sin z + t\cos z}{\cos z - t\sin z}.
\end{equation}

\subsection{The $q$-analogs} It is possible to give combinatorial interpretations of $Q_n(t,q)$ and 
$R_n(t,q)$ using cycle-alternating permutations, and the notion of {\it crossing} for signed permutations
which was defined in \cite{CJW}. But in the case of cycle-alternating permutations, there is a more simple
equivalent definition which is the following.

\begin{defn}
A {\it crossing} of $\pi\in\mathcal{C}_n$ is a pair $(i,j)\in\llbracket n \rrbracket^2$ such that $i<j<\pi(i)<\pi(j)$.
Equivalently, it is the number of intersections of two arches above the horizontal axis in the arch diagram of
$\pi$. We denote by $\cro(\pi)$ the numbers of crossings in $\pi$.
\end{defn}

Because of the symmetry in the arch diagram, $\cro(\pi)$ is also half the total number of intersections
between two arches.
For example, the two permutations in Figure~\ref{conjug} have respectively 2 and 3 crossings.
This kind of statistic can be followed through bijections between permutations and paths \cite{Corteel},
but let us first show that $Q_n(t,q)$ and $R_n(t,q)$ are related with some weighted Dyck prefixes 
(a Dyck prefix being similar to a Dyck path except that the final height can be non-zero).

\begin{defn}
Let $\mathcal{P}_n$ be the  set of weighted Dyck prefixes of length $n$, where
\begin{itemize}
 \item each step $\nearrow$ between heights $h$ and $h+1$ has a weight $q^i$ for some $i\in\{0,\dots h+1\}$,
 \item each step $\searrow$ between heights $h$ and $h+1$ has a weight $q^i$ for some $i\in\{0,\dots h\}$.
\end{itemize}
Let $\mathcal{P}'_n\subset\mathcal{P}_n$ be the subset of paths $p$ such that there is no step $\nearrow$
starting at height $h$ with the maximal weight $q^{h+1}$. For any $p\in\mathcal{P}_n$, let $\fh(p)$ be its
final height, and let $\w(p)$ be its total weight, {\it i.e.} the product of the weights of each step.
\end{defn}

\begin{prop} \label{dyckpre}
We have:
\begin{equation}
Q_n(t,q)= \sum_{p \in \mathcal{P}'_n } t^{\fh(p)} q^{\w(p)}, \qquad 
R_n(t,q)= \sum_{p \in \mathcal{P}_n } t^{\fh(p)} q^{\w(p)}.
\end{equation}
\end{prop}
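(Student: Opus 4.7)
The plan is to prove both identities by a single induction on $n$, by interpreting each application of the operator $D+UDU$ (resp.\ $D+DUU$) to a monomial $t^h$ as the $q$-weighted generating function of all legal one-step extensions of a Dyck prefix currently standing at height $h$ in $\mathcal{P}'$ (resp.\ in $\mathcal{P}$).

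First I would carry out the key three computations on a monomial $t^h$. From $D(t^h)=[h]_q t^{h-1}=(1+q+\cdots+q^{h-1})t^{h-1}$, we see that $D$ encodes appending a down-step from height $h$ to $h-1$, summed over the allowed weights $q^0,\ldots,q^{h-1}$; this matches the down-step weighting in both $\mathcal{P}$ and $\mathcal{P}'$. Next, $UDU(t^h)=U([h+1]_q t^h)=[h+1]_q t^{h+1}=(1+q+\cdots+q^h)t^{h+1}$, which matches the up-step weighting in $\mathcal{P}'$, where the maximal weight $q^{h+1}$ is precisely what is forbidden. On the other hand $DUU(t^h)=D(t^{h+2})=[h+2]_q t^{h+1}=(1+q+\cdots+q^{h+1})t^{h+1}$, which matches the full up-step weighting in $\mathcal{P}$.

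The induction itself is then immediate. The base case $n=0$ gives $(D+UDU)^0 1=1=t^0$, corresponding to the unique empty path of height $0$ and weight $0$; likewise for $\mathcal{P}_0$. For the induction step, assume $(D+UDU)^n 1=\sum_{p\in\mathcal{P}'_n} t^{\fh(p)}q^{\w(p)}$. Applying $D+UDU$ and using the two computations above, every monomial $t^h$ (which collects the contribution of those $p$ ending at height $h$) is replaced by the generating polynomial of all single-step extensions of $p$ still belonging to $\mathcal{P}'$, namely one down-step with weights $q^0,\ldots,q^{h-1}$ and one up-step with weights $q^0,\ldots,q^h$. This yields exactly $\sum_{p'\in\mathcal{P}'_{n+1}} t^{\fh(p')}q^{\w(p')}$. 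The argument for $R_n(t,q)$ is verbatim the same with $UDU$ replaced by $DUU$ and $\mathcal{P}'$ replaced by $\mathcal{P}$.

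There is essentially no obstacle; the only point that needs to be watched is that the two shapes $UDU$ and $DUU$ produce the coefficients $[h+1]_q$ and $[h+2]_q$ respectively, and this is exactly what distinguishes $\mathcal{P}'_n$ (up-steps at height $h\to h+1$ have $h+1$ admissible weights) from $\mathcal{P}_n$ (same up-steps have $h+2$ admissible weights). Alternatively, one can phrase the proof by noting that both sides satisfy the same linear recurrence in $n$ (obtained on the combinatorial side by removing the last step of a prefix and on the algebraic side by factoring out one copy of the operator), with the same initial value $1$, which gives the result without any explicit induction bookkeeping.
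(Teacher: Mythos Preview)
Your proof is correct and is essentially the same argument as the paper's: the paper computes the matrices $D_1+U_1D_1U_1$ and $D_1+D_1U_1U_1$ in the basis $\{t^h\}$ and reads $W_1M^nV_1$ as a transfer-matrix sum over paths, which is exactly your step-by-step action of $D+UDU$ and $D+DUU$ on monomials $t^h$ phrased as an induction. The only difference is presentational (matrix entries versus operator action), and your key computations $UDU(t^h)=[h+1]_q t^{h+1}$ and $DUU(t^h)=[h+2]_q t^{h+1}$ are precisely the off-diagonal entries the paper displays.
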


\begin{proof}
In the proof of Proposition~\ref{frac_pq}, we have used the matrices
$D_2+U_2D_2U_2$ and $D_2+D_2U_2U_2$, thought of as transfer-matrices for walks in 
the non-negative integers. We can do something similar with the matrices 
\begin{equation}
  D_1+U_1D_1U_1=\left(\begin{mmatrix}
    0    & [1]_q  &   &  (0) \\
   [1]_q & 0 &  [2]_q     \\
         & [2]_q  & 0   &  \bla \\ 
   (0)   &        & \bla  &  \bla \\
  \end{mmatrix}\right), \;
  D_1+D_1U_1U_1=\left(\begin{mmatrix}
    0    & [1]_q  &   &  (0) \\
   [2]_q & 0 &  [2]_q     \\
         & [3]_q  & 0   &  \bla \\ 
   (0)   &        & \bla  &  \bla \\
  \end{mmatrix}\right).
\end{equation}
Recall that we have \eqref{rel1}, where $W_1=(1,t,t^2,\dots)$. In terms of paths, the coefficients in the matrices and
vectors have the following meaning: from $V_1=(1,0,\dots)^*$, we only consider paths starting at height 0, and
from $W_1=(1,t,t^2,\dots)$ there is the weight $t^{\fh(p)}$ for each path $p$. The coefficients $(h+1,h)$ in the matrices
give the possible weights on steps $\nearrow$ from height $h$ to $h+1$, the coefficients $(h,h+1)$ in the matrices
give the possible weights on steps $\searrow$ from height $h+1$ to $h$. The result follows.
\end{proof}

\begin{prop}
There is a bijection $\Psi:\mathcal{C}_n\to\mathcal{P}'_n$ such that $\neg(\pi)=\fh(p)$ and $\cro(\pi)=\w(p)$ if the image
of $\pi$ is $p$.
\end{prop}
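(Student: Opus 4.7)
The plan is to define $\Psi$ by scanning the negative half $\{-n,-n+1,\ldots,-1\}$ of the arch diagram of $\pi$ from left to right, emitting one Dyck step per position. By the cycle-alternating hypothesis, each position $j$ in this half is either a valley (in which case $\pi(j),\pi^{-1}(j)>j$, so both arches incident to $j$ continue to the right) or a peak (both continue to the left). I emit $\nearrow$ at a valley and $\searrow$ at a peak. The net displacement after all $n$ steps equals $\#(\text{valleys in }-[n])-\#(\text{peaks in }-[n])$, which by the central symmetry $i\leftrightarrow -i$ equals $\#(\text{peaks in }[n])-\#(\text{valleys in }[n])$. Since in a cycle-alternating permutation the successor of a peak is a valley, the map $i\mapsto\pi(i)$ restricts to a bijection between the peaks $i\in[n]$ with $\pi(i)>0$ and the valleys in $[n]$; hence the difference simplifies to the number of peaks $i\in[n]$ with $\pi(i)<0$, which is exactly $\neg(\pi)$. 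Applying the same count to every prefix shows that the heights stay non-negative, so the output is a genuine Dyck prefix of final height $\neg(\pi)$.

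Next I define the weight. Maintain an ordered list $L$ of currently active arches — those whose left endpoint has been scanned but whose right endpoint has not — sorted by right endpoint. At each $j$, exactly two arches are simultaneously inserted into (valley) or removed from (peak) $L$, so $|L|=2h$ at every time, where $h$ is the current height of the emerging path. At a valley I read off the weight $q^i$ where $i$ counts the above-axis active arches whose right endpoint lies strictly between $\min(\pi(j),\pi^{-1}(j))$ and $\max(\pi(j),\pi^{-1}(j))$ (or an equivalent convention counting the crossings created against the existing active arches); a careful choice of convention gives $i\in\{0,\ldots,h\}$, which is exactly the weight range in $\mathcal{P}'_n$. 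At a peak the same quantity is read off from the positions of the departing pair in $L$. The identity $\cro(\pi)=\w(p)$ then follows from the standard argument that each crossing of two arches of the diagram is created at a unique step — the later of the two openings — and contributes exactly $1$ to the weight at that step.

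Bijectivity is established by writing down the inverse: given $p\in\mathcal{P}'_n$, the step type together with the weight and the list $L$ computed so far uniquely determine the two right endpoints that are inserted or removed at the current time, and hence recover $\pi(j)$ and $\pi^{-1}(j)$. The main obstacle will be pinning down the weight convention so that the range $\{0,\ldots,h\}$ comes out correctly at both valley and peak steps (where two insertions or removals happen simultaneously), and in particular verifying that the "maximal" weight $q^{h+1}$ can never occur at an up-step. This last point is exactly what separates $\mathcal{P}'_n$ from $\mathcal{P}_n$, and I expect it to follow from the centrally symmetric structure of the arch diagram forcing at least one of the two new arches at every valley to nest inside some already-active arch.
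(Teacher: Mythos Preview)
Your left-to-right scan of the negative half is exactly the paper's bijection: the paper embeds $\mathfrak{S}^\pm_n$ into $\mathfrak{S}_{2n}$, applies the Foata--Zeilberger/Corteel map $\Psi_{FZ}$ to get a vertically symmetric weighted Dyck path of length $2n$, and keeps the first half. Your argument that $\fh(p)=\neg(\pi)$ via the peak/valley count is correct and in fact more explicit than what the paper writes.

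The genuine gap is the weight. Your bookkeeping with $|L|=2h$ active arches and the proposed exponent ``above-axis active arches with right endpoint strictly between $\min(\pi(j),\pi^{-1}(j))$ and $\max(\pi(j),\pi^{-1}(j))$'' is not the right statistic, and your own hedging (``or an equivalent convention'', ``I expect it to follow'') reflects this. The key observation you are missing is that of the two arches opening at a valley $j$, exactly one is above-axis (the arch $(j,\pi(j))$) and one is below-axis (the arch $(\pi^{-1}(j),j)$), and likewise at a peak; hence among the open arches there are always exactly $h$ of each kind, not $2h$ of one kind. The paper's weight at a valley $j$ is simply $k=\#\{i:i<j<\pi(i)<\pi(j)\}$, i.e.\ the number of the $h$ open above-axis arches whose right endpoint is smaller than $\pi(j)$; dually at a peak one compares the closing below-axis arch to the $h$ other open below-axis arches. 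With this definition the bound $k\le h$ is immediate (there are only $h$ arches to compare against), no nesting-from-symmetry argument is needed, each above-axis crossing is recorded exactly once (at the step indexed by its larger left endpoint $b$ if $b<0$, or by $-b$ if $b>0$), and invertibility is the standard $\Psi_{FZ}$ argument. This is why the paper's proof can afford to be a one-line citation.
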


\begin{proof}
We can use the the bijection $\Psi_{FZ}$ from \cite{Corteel} between permutations and weighted Motzkin paths.
To do this, we identify $\mathfrak{S}^\pm_n$ to a subset of $\mathfrak{S}_{2n}$ via the order-preserving bijection
$\llbracket n\rrbracket \to [2n]$. This subset is characterized by the fact that the arrow diagrams are centrally
symmetric, and via $\Psi_{FZ}$ from \cite{Corteel}, it is in bijection with some weighted Motzkin paths that are
vertically symmetric. Cycle-alternating permutations correpond to the case where there is no horizontal step,
{\it i.e.} they are in bijection with some weighted Dyck paths that are vertically symmetric. Keeping the first half
of these Dyck paths gives the desired bijection.
\end{proof}

For convenience, we rephrase here explicitly the bijection of the previous proposition. Let $\pi\in\mathcal{C}_n$,
then we define a $\Psi(\pi)\in\mathcal{P}'_n$ such that, for any $j\in\{{-n}\dots{-1}\}$:
\begin{itemize}
 \item the $(n+1+j)$th step is $\nearrow$ if $\pi^{-1}(j)>j<\pi(j)$ and $\searrow$ if $\pi^{-1}(j)<j>\pi(j)$,
 \item if the $(n+1+j)$th step is $\nearrow$, it has a weight $q^k$ where $k$ is the number of $i$ such that $(i,j)$ is a
         crossing, {\it i.e.} $i<j<\pi_i<\pi_j$,
 \item if the $(n+1+j)$th step is $\searrow$, it has a weight $q^k$ where $k$ is the number of $i$ such that $(-i,-j)$ is a
         crossing, {\it i.e.} $-i<-j<-\pi_i<-\pi_j$.
\end{itemize}
See Figure~\ref{exbijcro} for an example.

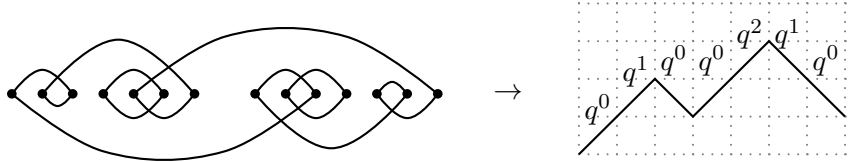
\begin{figure}[h!tp] \psset{unit=4mm}
\begin{pspicture}(-7,-2)(7,2)
\psdots(-7,0)(-6,0)(-5,0)(-4,0)(-3,0)(-2,0)(-1,0)\psdots(7,0)(6,0)(5,0)(4,0)(3,0)(2,0)(1,0)
\pscurve(-7,0)(-6,0.8)(-5,0)\pscurve(-6,0)(-3.5,1.8)(-1,0)\pscurve(-4,0)(-3,0.8)(-2,0)\pscurve(-3,0)(0,1.9)(4,1.9)(7,0)
\pscurve(7,0)(6,-0.8)(5,0)\pscurve(6,0)(3.5,-1.8)(1,0)\pscurve(4,0)(3,-0.8)(2,0)\pscurve(3,0)(0,-1.9)(-4,-1.9)(-7,0)
\pscurve(1,0)(2,0.8)(3,0)\pscurve(2,0)(3,0.8)(4,0)\pscurve(5,0)(5.5,0.4)(6,0)
\pscurve(-1,0)(-2,-0.8)(-3,0)\pscurve(-2,0)(-3,-0.8)(-4,0)\pscurve(-5,0)(-5.5,-0.4)(-6,0)
\end{pspicture}
\begin{pspicture}(-2,-2)(2,2)
\rput(0,0){$\to$}
\end{pspicture}
\psset{unit=5mm}
\begin{pspicture}(0,0)(7,4)
\psgrid[gridcolor=gray,griddots=4,subgriddiv=0,gridlabels=0](0,0)(7,4)
\psline(0,0)(2,2)(3,1)(5,3)(7,1)
\rput(0.5,1.2){$q^0$}\rput(1.5,2.2){$q^1$}\rput(2.5,2.5){$q^0$}
\rput(3.5,2.5){$q^0$}\rput(4.5,3.2){$q^2$}\rput(5.5,3.2){$q^1$}\rput(6.5,2.5){$q^0$}
\end{pspicture}
\caption{ The bijection $\Psi$ from $\mathcal{C}_n$ to $\mathcal{P}'_n$ in the case of
  $\pi=3,4,-7,2,6,1,5$.  \label{exbijcro}}
\end{figure}

Through this bijection we obtain combinatorial models of $Q_n(t,q)$ and $R_n(t,q)$ using the notion of crossing.

\begin{prop} We have
\begin{equation}
Q_n(t,q) = \sum_{\pi\in\mathcal{C}_n} t^{\neg(\pi)}q^{\cro(\pi)}  , \qquad
R_n(t,q) = \sum_{\substack{ \pi\in\mathcal{C}_{n+1} \\ \pi_{n+1}<0 }} 
   t^{\neg(\pi)-1}q^{\cro(\pi)}.
\end{equation}
Alternatively, the condition $\pi_{n+1}<0$ in the second identity can be replaced with $\pi^{-1}(n+1)<0$.
\end{prop}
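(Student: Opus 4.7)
The first identity is an immediate consequence of Proposition~\ref{dyckpre} combined with the bijection $\Psi: \mathcal{C}_n \to \mathcal{P}'_n$ already constructed, which satisfies $\fh(\Psi(\pi)) = \neg(\pi)$ and $\w(\Psi(\pi)) = \cro(\pi)$. Composing these two facts yields
\begin{equation*}
Q_n(t,q) = \sum_{p \in \mathcal{P}'_n} t^{\fh(p)} q^{\w(p)} = \sum_{\pi \in \mathcal{C}_n} t^{\neg(\pi)} q^{\cro(\pi)}.
\end{equation*}

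For the second identity, my plan is to construct a bijection $\Psi': \{\pi \in \mathcal{C}_{n+1} : \pi_{n+1} < 0\} \to \mathcal{P}_n$ satisfying $\fh(\Psi'(\pi)) = \neg(\pi) - 1$ and $\w(\Psi'(\pi)) = \cro(\pi)$; combined with the $R_n$ half of Proposition~\ref{dyckpre} this will suffice. The key observation is that, for $\pi$ in this set, the value $-(n+1)$ is a cycle valley whose arch reaches $\pi(-(n+1)) = -\pi(n+1) > 0$, so it crosses the central axis and remains open throughout the processing of the remaining values $j \in \{-n, \ldots, -1\}$. Mimicking the $\Psi$-style construction on these $n$ values yields a path of length $n$; whenever a new arch is opened at a valley $j$, it may or may not cross this ever-open ``persistent'' arch from $-(n+1)$, providing exactly one extra choice of weight at each $\nearrow$ step compared to the $\mathcal{P}'_n$ setting. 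This enlarges the weight range for $\nearrow$ at height $h$ from $\{q^0, \ldots, q^h\}$ to $\{q^0, \ldots, q^{h+1}\}$, which is exactly the feature that distinguishes $\mathcal{P}_n$ from $\mathcal{P}'_n$. Because the persistent arch is never closed inside the negative range, the $\searrow$ weights retain the range $\{q^0, \ldots, q^h\}$ required by $\mathcal{P}_n$. The shift $\fh = \neg - 1$ reflects the fact that the persistent arch contributes one unit to the final height in the full $\Psi$ construction (it is one of the $\neg(\pi)$ many arches crossing the axis) but is no longer recorded by $\Psi'$, while $\w = \cro$ holds because every crossing, including those with the persistent arch, is counted in the step weights.

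The main obstacle is a rigorous verification of injectivity and surjectivity of $\Psi'$: one must exhibit an explicit inverse which, given $p \in \mathcal{P}_n$, simultaneously reconstructs the restriction of $\pi$ to $\{-n, \ldots, -1\}$ and pins down the positive endpoint $\pi(-(n+1))$, the latter being encoded in the pattern of ``extra'' weights $q^{h+1}$ appearing on the $\nearrow$ steps. The equivalent formulation with the condition $\pi^{-1}(n+1) < 0$ is obtained by running the same construction on the mirror arch from $-\pi^{-1}(n+1)$ to $n+1$, and the fact that both conditions yield the same sum reflects the symmetry of $\Psi$ under the involution $\pi \mapsto \pi^{-1}$, which preserves both $\mathcal{C}_{n+1}$ and the statistics $\neg$ and $\cro$.
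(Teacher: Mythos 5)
Your treatment of the first identity and of the equivalence between the conditions $\pi_{n+1}<0$ and $\pi^{-1}(n+1)<0$ (via the inversion involution, which is a horizontal symmetry of arch diagrams preserving $\mathcal{C}_{n+1}$, $\neg$ and $\cro$) coincides with the paper's. For the second identity, however, what you give is a plan rather than a proof, and the gap you flag yourself is the essential content. Your counting heuristic for the $\nearrow$ steps (one extra interleaving choice coming from the persistent arch) is consistent, but the claim that the $\searrow$ weights ``retain the range $\{q^0,\dots,q^h\}$'' is exactly the non-trivial point: the truncated path inherits, at a $\searrow$ step from height $h+1$ to $h$, a weight range of size $h+2$ from the ambient length-$(n+1)$ construction, and one must prove that precisely one value is never attained and identify which one. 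Saying that the persistent arch ``is never closed inside the negative range'' does not by itself control the crossing count recorded on a $\searrow$ step, since that count also sees the centrally symmetric partner of the persistent arch below the axis. Without this, neither the inclusion of the image in $\mathcal{P}_n$ nor surjectivity is established.

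The paper closes this gap by staying entirely inside the already-proved bijection $\Psi:\mathcal{C}_{n+1}\to\mathcal{P}'_{n+1}$: it introduces the subset $\mathcal{P}''_{n+1}\subset\mathcal{P}'_{n+1}$ of paths with no $\searrow$ step of maximal weight $q^h$ (which forces no return to height $0$), observes that deleting the first step is a weight-preserving bijection $\mathcal{P}''_{n+1}\to\mathcal{P}_n$ dropping the final height by one, and then identifies $\Psi^{-1}(\mathcal{P}''_{n+1})$ as $\{\pi\in\mathcal{C}_{n+1}:\pi^{-1}(n+1)<0\}$ by invoking the characterization of maximal-weight steps in terms of right-to-left minima (Lemma~3.2.2 of \cite{Josuat2}). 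Note that the paper anchors the argument on the condition $\pi^{-1}(n+1)<0$, for which the excluded $\searrow$ weight is simply the maximal one, and only then transfers to $\pi_{n+1}<0$ by the inversion symmetry; you run the construction the other way around, starting from $\pi_{n+1}<0$, which is precisely the choice that makes the excluded weight value opaque. To complete your argument you would either need to prove the analogue of that lemma for your persistent-arch scan, or reorder the steps as the paper does.
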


\begin{proof}
The first identity follows from Proposition~\ref{dyckpre} and the bijection $\Psi$ between $\mathcal{C}_n$
and $\mathcal{P}'_n$, but the second one is not as immediate.

First, note that the bijection $\pi\mapsto\pi^{-1}$ stabilizes the set $\mathcal{C}_n$ and preserves the number
of crossings, since it is just an horizontal symmetry on the arch diagrams. This proves the fact that we can
replace $\pi_{n+1}<0$ with $\pi^{-1}(n+1)<0$.

Then, let us consider the subset $\mathcal{P}''_{n+1}\subset\mathcal{P}'_{n+1}$ of elements $p$ such that
there is no step $\searrow$ from height $h+1$ to $h$ with weight $q^h$ (note that this condition implies there
is no return to height $0$). There is an obvious bijection from  $\mathcal{P}''_{n+1}$ to $\mathcal{P}_{n}$,
because removing the first step in $p\in\mathcal{P}''_{n+1}$ gives a path which is in $\mathcal{P}_{n}$ with
respect to the shifted origin $(1,1)$. This proves:
\begin{equation}
  R_n(t,q) = \sum_{p\in\mathcal{P}''_{n+1}} t^{\fh(p)-1} q^{\w(p)}.
\end{equation}
It remains to show that the image of $\mathcal{P}''_{n+1}$ via $\Psi^{-1}$ is precisely the set of $\pi\in\mathcal{C}_{n+1}$
such that $\pi^{-1}(n+1)<0$. Essentially, this follows from the fact that there are some steps in the path which characterize
the right-to-left minima in $\pi$, and we can use Lemma~3.2.2 from \cite{Josuat2}. In our case, the subset
$\mathcal{P}''_{n+1}\subset\mathcal{P}'_{n+1}$ corresponds to a subset of $\mathfrak{S}_{2n+2}$ (via the identification
$\mathfrak{S}^\pm_{n+1}\to\mathfrak{S}_{2n+2}$), more precisely to the subset of $\sigma$ having no right-to-left minima
among $1\dots n+1$, {\it i.e.} $\sigma^{-1}(1)>n+1$. In terms of $\pi\in\mathfrak{S}^\pm_{n+1}$, this precisely
means that $\pi^{-1}(-n-1)>0$ and completes the proof.
\end{proof}

\begin{rem}
Independently from our work, Chen, Fan and Jia \cite{CFJ} gave a bijection between snakes of type $B$ and Dyck
prefixes such that there are $h+1$ possible choices on each step between height $h$ and $h+1$, and their statistic
$\alpha(\pi)$ (see Theorem~4.5) is the same as our statistic $\sch(\pi)$, though defined differently. We can have such
a bijection by composing two we have presented, but their bijection is not the same as ours and does not seem to be
directly related. To advocate our bijection $\mathcal{S}_n\to\mathcal{P}'_n$, note that going through the set $\mathcal{C}_n$
we just had to adapt two known bijections: Foata's fundamental transform \cite{lothaire} (in the direction
$\mathcal{S}_n\to\mathcal{C}_n$), then Corteel's bijection $\Psi_{FZ}$ \cite{Corteel} originally given by Foata and Zeilberger
(in the direction $\mathcal{C}_n\to\mathcal{P}'_n$).
\end{rem}

\subsection{Another variant of $\Psi_{FZ}$}
With another adaptation of this bijection, we can link $\mathcal{C}_n$ and some weighted Motzkin path, so that we
obtain a combinatorial proof of a $J$-fraction generalizing \eqref{fvfrac1} and \eqref{fvfrac2} in Theorem~\ref{jfracqnab}
below. As a consequence of Theorem~\ref{comb_cycl}, we have:
\begin{equation} \label{qnatcomb}
  Q_n^{(a)}(t)=\sum_{\pi\in\mathcal{C}_n} a^{{\rm cyc}(\pi)} t^{\neg(\pi)}
\end{equation}
where ${\rm cyc}(\pi)$ is the number of cycles, indeed since there is a ``cycle structure'' in $\mathcal{C}_n$ it suffices to take
the $a$th power of the exponential generating function to have a parameter $a$ counting ${\rm cyc}(\pi)$.
With the recurrence for $Q_n^{(a)}(t)$ in \eqref{rec_pqa}
and with the same method as in the case of $Q_n$ and $R_n$, we can prove a continued fraction for
the ordinary generating function of $\{Q_n^{(a)}(t)\}_{n\geq0}$.
We prove here bijectively this continued fraction, by going from $\mathcal{C}_n$ to weighted Motzkin paths.
To this end, we use another representation of a signed permutation, the {\it signed arch diagram}. Let
$\pi\in\mathfrak{S}^{\pm}_n$, this diagram is obtained the following way: draw $n$ dots labelled by $1\dots n$
from left to right on the horizontal axis, then for each $i\in[n]$, draw an arch from $i$ to $|\pi_i|$, and label
this arrow with a $+$ if $\pi_i>0$ and with a $-$ if $\pi_i<0$. We understand that as in the previous arch
notation, the arch is above the axis if $i\leq|\pi_i|$ and below otherwise. See the left part of Figure~\ref{signarrow}
for an example. (In the case $|\pi_i|=i$, the arch is a loop attached to the dot $i$.) A case-by-case study shows that
$\pi$ is cycle-alternating if and only if it avoids certain configurations in the signed arch diagram, which are listed
in the right part of Figure~\ref{signarrow} ($\pm$ being either $+$ or $-$). These forbidden configurations are of
course reminiscent of \eqref{signs}.

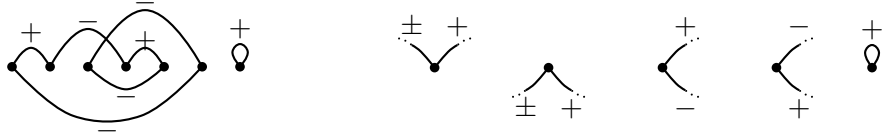
\begin{figure}[h!tp] \psset{unit=5mm}
\begin{pspicture}(1,-1.7)(6,1.7)
\psdots(1,0)(2,0)(3,0)(4,0)(5,0)(6,0)(7,0)
\pscurve(1,0)(1.5,0.5)(2,0) \rput(1.5,0.9){$+$}
\pscurve(2,0)(3,1)(4,0)      \rput(3,1.2){$-$}
\pscurve(3,0)(4.5,1.5)(6,0)  \rput(4.5,1.7){$-$}
\pscurve(4,0)(4.5,0.5)(5,0)  \rput(4.5,0.7){$+$}
\pscurve(1,0)(2.6,-1.3)(4.4,-1.3)(6,0) \rput(3.5,-1.7){$-$}
\pscurve(3,0)(4,-0.6)(5,0)  \rput(4,-0.8){$-$}
\pscurve(7,0)(6.8,0.4)(7,0.6)(7.2,0.4)(7,0) \rput(7,1){$+$}
\end{pspicture}
\hspace{2.5cm} \psset{unit=3mm}
\begin{pspicture}(-1,-2.8)(1,2.8)
 \psdot(0,0)
\pscurve(0,0)(0.7,0.8)(1,1)\pscurve[linestyle=dotted,dotsep=0.6mm](1,1)(1.3,1.2)(1.6,1.3) \rput(1,1.8){$+$}
\pscurve(0,0)(-0.7,0.8)(-1,1)\pscurve[linestyle=dotted,dotsep=0.6mm](-1,1)(-1.3,1.2)(-1.6,1.3) \rput(-1,1.8){$\pm$}
\end{pspicture}
\qquad
\begin{pspicture}(-1,-2.8)(1,2.8)
 \psdot(0,0)
\pscurve(0,0)(0.7,-0.8)(1,-1)\pscurve[linestyle=dotted,dotsep=0.6mm](1,-1)(1.3,-1.2)(1.6,-1.3) \rput(1,-1.8){$+$}
\pscurve(0,0)(-0.7,-0.8)(-1,-1)\pscurve[linestyle=dotted,dotsep=0.6mm](-1,-1)(-1.3,-1.2)(-1.6,-1.3) \rput(-1,-1.8){$\pm$}
\end{pspicture}
\qquad
\begin{pspicture}(-1,-2.8)(1,2.8)
\psdot(0,0)
\pscurve(0,0)(0.7,0.8)(1,1)\pscurve[linestyle=dotted,dotsep=0.6mm](1,1)(1.3,1.2)(1.6,1.3) \rput(1,1.8){$+$}
\pscurve(0,0)(0.7,-0.8)(1,-1)\pscurve[linestyle=dotted,dotsep=0.6mm](1,-1)(1.3,-1.2)(1.6,-1.3) \rput(1,-1.8){$-$}
\end{pspicture}
\qquad
\begin{pspicture}(-1,-2.8)(1,2.8)
\psdot(0,0)
\pscurve(0,0)(0.7,0.8)(1,1)\pscurve[linestyle=dotted,dotsep=0.6mm](1,1)(1.3,1.2)(1.6,1.3) \rput(1,1.8){$-$}
\pscurve(0,0)(0.7,-0.8)(1,-1)\pscurve[linestyle=dotted,dotsep=0.6mm](1,-1)(1.3,-1.2)(1.6,-1.3) \rput(1,-1.8){$+$}
\end{pspicture}
\qquad
\begin{pspicture}(-0.2,-2.8)(0.2,2.8)
 \psdot(0,0)
\pscurve(0,0)(-0.3,0.7)(0,1)(0.3,0.7)(0,0) \rput(0,1.7){$+$}
\end{pspicture}
\caption{The signed arch diagram of $\pi=2,-4,-6,5,-3,-1,7$, and the forbidden configurations in the signed
arch diagram of a cycle-alternating permutation. \label{signarrow} }
\end{figure}

Once we know the forbidden configurations, it is possible to encode the signed arch diagram of
$\pi\in\mathcal{C}_n$ by a weighted Motzkin path of $n$ steps, in a way similar to $\Psi_{FZ}$
from \cite{Corteel}. The diagram of $\pi$ is scanned from
left to right, and the path is built such that:
\begin{itemize}
 \item If the $i$th node in $\pi$ is \nodr then the $i$th step of $p$ if $\nearrow$, moreover this step has label
$+$ (resp. $-$) if the two arches starting from the $i$th node have label $+$ (resp. $-$).
 \item If the $i$th node in $\pi$ is \nodh then the $i$th step of $p$ if $\rightarrow$. Moreover this step has label
$(j)$ if the left strand in the $i$th node is connected to the $j$th strand of the ``partial'' 
signed arch diagram (say, from bottom to top). See for example the left part of Figure~\ref{bijqa} where there are
three possible choices to connect this kind of node. We can see that the possible labels are $(1),\dots,(h)$
where $h$ is the starting height of the $i$th step in the path.
 \item Similarly if the $i$th node in $\pi$ is \nodb then the $i$th step of $p$ if $\rightarrow$, and this step has label
$(j')$ if the $i$th node is connected on the left to the $j$th strand of the ``partial'' signed arch diagram (say, from top
to bottom).
\item If the $i$th node in $\pi$ is \nodf then the $i$th step is $\rightarrow$ with label $(0)$.
\item If the $i$th node in $\pi$ is \nodg then the $i$th step is $\searrow$ with a label $(j,k)$ where $j$ and $k$
        respectively encode where the upper and lower strands are connected, as in the case of \nodh and \nodb
        above.
\end{itemize}

\begin{figure}[h!tp]\psset{unit=4mm}
\begin{pspicture}(1,-3.5)(10,3.5)
\psdots(1,0)(2,0)(3,0)(4,0)(5,0)(6,0)(7,0)(8,0)
\pscurve(1,0)(1.5,0.5)(2,0) \rput(1.5,1){$+$}
\pscurve(2,0)(3,1)(4,0)      \rput(3,1.2){$-$}
\pscurve(3,0)(4.5,2.1)(7,3.3)(9,3.5) \rput(4.5,2.9){$+$}
\pscurve(4,0)(5.5,1.4)(7,0) \rput(5.2,1.6){$-$}
\pscurve(5,0)(7,1.9)(9,2.3) \rput(7.5,2.4){$-$}
\pscurve(5,0)(7,-1.9)(9,-2.3) \rput(7.5,-2.4){$-$}
\pscurve(6,0)(6.5,-0.6)(7,0) \rput(6.5,-0.8){$-$}
\pscurve(8,0)(8.5,0.7)(9,0.8) \rput(8.7,1.2){$+$}
\pscurve(8,0)(8.5,-0.7)(9,-0.8) \rput(8.7,-1.2){$+$}
\pscurve(3,0)(4.5,-1.4)(6,0) \rput(4.5,-1.9){$+$}
\pscurve(1,0)(2.5,-2)(4.7,-3)(9,-3.5) \rput(4,-3.4){$+$}
\rput(10,0){$\dots$}
\end{pspicture} \psset{unit=4mm}
\begin{pspicture}(-3,-3.5)(3,3.5)  \rput(0,0){$\to$}  \end{pspicture}\psset{unit=6mm}
\begin{pspicture}(0,-0.5)(9,4.5)
\psgrid[gridcolor=gray,griddots=4,subgriddiv=0,gridlabels=0](0,0)(9,4)
\psline(0,0)(1,1)(2,1)(3,2)(4,2)(5,3)(6,3)(7,2)(8,3)
\rput(0.5,1.3){\footnotesize $+$}
\rput(1.5,1.6){\footnotesize $(1)$}
\rput(2.5,2.3){\footnotesize $+$}
\rput(3.5,2.6){\footnotesize $(2)$}
\rput(4.5,3.3){\footnotesize $-$}
\rput(5.5,3.8){\footnotesize $(2')$}
\rput(6.5,3.3){\footnotesize (2,1)}
\rput(7.5,3.3){\footnotesize $+$}
\rput(9.2,2.3){$\dots$}
\end{pspicture}
\caption{ Construction of the bijection from the signed arch diagram of $\pi\in\mathcal{C}_n$ to
a labelled Motzkin path.  \label{bijqa}}
\end{figure}
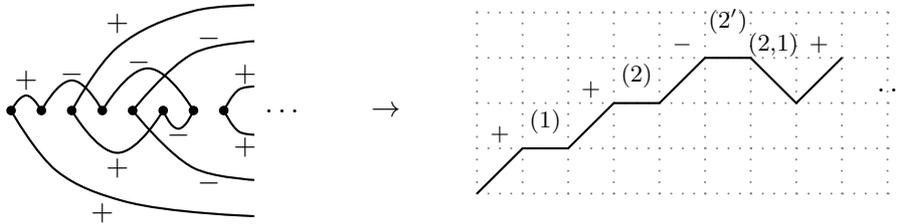

Quite a few bijections of this kind are known so we will not give further details. As for the parameters
$a$ and $t$, they are taken into account the following way:
\begin{itemize}
 \item For each arch with label $-$ linking $i$ and $j$ where $i<j$, we give a weight $t$ to the $i$th step
         in the path. Thus there is a weight $t^2$ on the step $\nearrow$ with label $-$, and a weight $t$
         on each step $\rightarrow$.
\item For each $i$ which is the maximal element of a cycle, we give a weight $a$ to the $i$th step.
        Thus each step $\rightarrow$ with weight $(0)$ has a weight $a$, since these correspond to
         nodes \nodf or cycles of the form $(i,-i)$. Besides, there is a weight $a$ on some of the step
         $\searrow$. More precisely, there are $h^2$ possible labels on a step $\searrow$ starting at 
         height $h$, and $h$ of them have this weight $a$. Indeed, for each choice of where to connect
         the upper strand of the node \nodg there is a unique choice of where to connect the lower strand
         so that a new cycle is ``closed''.
\end{itemize}
All in all, using the classical correspondence between weighted Motzkin paths and $J$-fractions,
this bijection proves the following theorem.

\begin{thm} \label{jfracqnab}
Let $Q_n^{(a)}(t)$ be the polynomials in $a$ and $t$ with exponential
generating function $(\cos z - t\sin z)^{-a}$, or defined combinatorially by \eqref{qnatcomb}. Then,
using the notation in Definition~\ref{def_jfra}, we have:
\begin{equation}
  \sum_{n=0}^\infty Q_{n}^{(a)}(t) z^n = \mathfrak{J}\big(  \;  (2h+a)t , \; h(h-1+a)(1+t^2)  \;  \big).
\end{equation}
\end{thm}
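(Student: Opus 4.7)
The plan is to use directly the combinatorial identity
\begin{equation*}
 Q_n^{(a)}(t)=\sum_{\pi\in\mathcal{C}_n} a^{{\rm cyc}(\pi)} t^{\neg(\pi)}
\end{equation*}
from \eqref{qnatcomb} together with the bijection $\Phi$ constructed in the preceding paragraphs, which sends $\pi\in\mathcal{C}_n$ to a weighted Motzkin path $p$ of length $n$ with the step weights and labels described above. Once one verifies that the total weight of $p$ equals $a^{{\rm cyc}(\pi)}t^{\neg(\pi)}$, it suffices to collect the weights of each step type at each starting height $h$ and invoke the classical transfer-matrix argument of \cite{Fla} (analogous to \eqref{expa} in the proof of Proposition~\ref{frac_pq}) to read off the $J$-fraction coefficients $b_h$ and $\lambda_h$.

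The first step is to check that $\Phi$ is indeed bijective onto the set of labelled Motzkin paths described, and that the two auxiliary weights behave as claimed. The $t$-weights come from negative labels: by construction each arch labelled $-$ contributes one factor $t$ attached to its left endpoint, so the product over all steps is $t^{\neg(\pi)}$. The $a$-weights come from cycle maxima: the maximum of any cycle is either an isolated fixed point producing a node \nodf (a horizontal step with label $(0)$, weighted by $a$) or a cycle peak whose two outgoing arches meet strands that ``close off'' the cycle (a down step \nodg for which one among the $h$ choices of upper-strand and corresponding lower-strand produces a new cycle closure, weighted by $a$). Thus the product of $a$-weights is $a^{{\rm cyc}(\pi)}$.

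The second step is the bookkeeping at each height $h$. For an up step \nodr starting at height $h$, the only freedom is the common sign of the two arches, giving weight $1$ or $t^2$, hence a total weight $1+t^2$ independent of $h$. For a down step \nodg from height $h$ to $h-1$, there are $h$ choices for connecting the upper strand and $h$ choices for the lower strand, for a total of $h^2$ labels; among these, exactly $h$ close a cycle (one per upper-strand choice) and contribute a weight $a$, while the remaining $h(h-1)$ have weight $1$, giving a total weight $ha+h(h-1)=h(h-1+a)$. Multiplying, $\lambda_h=(1+t^2)\,h(h-1+a)$. For a horizontal step at height $h$, the $h$ labels of type \nodh each carry weight $t$ (since the rightward arch is forced to be negative by the forbidden configurations of Figure~\ref{signarrow}), similarly the $h$ labels of type \nodb each carry weight $t$, and the single label \nodf carries weight $ta$; summing gives $b_h=(2h+a)t$.

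The main thing to be careful about is the last verification, namely identifying exactly which labels receive a factor $t$ or $a$, since this depends on the case analysis of the forbidden configurations in Figure~\ref{signarrow} for cycle-alternating permutations (and in particular on the fact that the rightward arch at \nodh and \nodb must be negative, and that a loop \nodf must be negative). Once this is settled, putting the three weight computations together with the standard correspondence between weighted Motzkin paths and $J$-fractions immediately yields
\begin{equation*}
 \sum_{n=0}^\infty Q_n^{(a)}(t)z^n=\mathfrak{J}\bigl((2h+a)t,\; h(h-1+a)(1+t^2)\bigr),
\end{equation*}
as claimed. As a consistency check, the specializations $a=1$ and $a=2$ recover \eqref{fvfrac1} and \eqref{fvfrac2}.
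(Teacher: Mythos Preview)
Your proposal is correct and follows essentially the same approach as the paper: you use the bijection from signed arch diagrams of cycle-alternating permutations to labelled Motzkin paths described just before the theorem, track the $t$-weight via negative arches and the $a$-weight via cycle maxima, and then do the height-by-height bookkeeping (up steps give $1+t^2$, down steps from height $h$ give $h(h-1)+ha$, horizontal steps at height $h$ give $(2h+a)t$) before invoking the standard Motzkin-path/$J$-fraction correspondence. A couple of your phrasings are slightly loose---the \nodf\ node is not a ``fixed point'' but the cycle $(i,-i)$, and not every \nodg\ node is a cycle maximum (only $h$ of the $h^2$ labels close a cycle, as you then correctly compute)---but these do not affect the argument.
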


Apart the combinatorial side, this result was already known by Stieltjes~\cite{stieltjes}.
Besides his analytical proof, it is also possible to obtain the continued fraction by showing that
the exponential generating function satisfies some particular kind of addition formula which
permits to make use of a theorem of Stieltjes and Rogers, see for example \cite[Theorem 5.2.10]{GJ}.
We thank Philippe Flajolet for communicating these facts and references.

\section{Increasing trees and forests} 
\label{sectree}

So far, we have only used recurrence relations and bijections to derive our new models of the derivative polynomials.
Some more elaborate methods apply pretty well in our case to give combinatorial models in terms of increasing trees
and forests. We first present how to use the combinatorial theory of differential equations, as exposed by Leroux and
Viennot in \cite{LV}, and secondly how to use recent results of Błasiak and Flajolet related with operators and normal
ordering \cite{Fla}.

\subsection{Increasing trees via differential equations}
An archetype example in the combinatorial theory of differential equation is the one of the tangent and secant functions
(see \cite{LV}), and it has become a classical method to show that Euler numbers $E_n$ count some increasing trees,
{\it i.e.} labelled trees where labels are increasing from the root to the leaves. Here we have a system of two differential
equations similar to the one of tangent and secant, except that an initial condition is given by the parameter $t$ instead
of $0$. This will give rise to more general trees where we allow some leaves with no label having a weight $t$.

\begin{lem} Let $f=\sum P_n z^n /n!$ and $g=\sum Q_n z^n /n!$, then we have:
\begin{equation}
  \begin{cases} f' =  1 + f^2 & f(0) = t, \\
    g' = fg          & g(0) = 1.
  \end{cases}
\end{equation}
\end{lem}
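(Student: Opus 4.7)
The plan is to start from the closed-form generating functions already established in Proposition~\ref{elem}, namely
\[
f(z) = \frac{\sin z + t\cos z}{\cos z - t\sin z}, \qquad g(z) = \frac{1}{\cos z - t\sin z},
\]
and verify the two differential equations and initial conditions by direct differentiation. The initial conditions are immediate: $f(0)=t/1=t$ (which also matches $P_0=t$) and $g(0)=1/1=1$ (matching $Q_0=1$).

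For the differential equations, I would introduce the abbreviations $N=\sin z+t\cos z$ and $D=\cos z - t\sin z$. The key observation is the pair of identities $N'=D$ and $D'=-N$, which follow immediately by differentiating with respect to $z$. Then for $f=N/D$,
\[
f' = \frac{N'D - N D'}{D^2} = \frac{D^2 + N^2}{D^2} = 1 + \left(\frac{N}{D}\right)^2 = 1+f^2,
\]
and for $g=1/D$,
\[
g' = -\frac{D'}{D^2} = \frac{N}{D^2} = \frac{N}{D}\cdot\frac{1}{D} = fg,
\]
which is exactly what is required.

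An equivalent, and perhaps more conceptual, route would be to recall from the proof of Proposition~\ref{elem} that setting $u=\arctan t$ gives $f(z)=\tan(u+z)$ and $g(z)=\cos u\cdot \sec(u+z)$, from which $f'=\sec^2(u+z)=1+f^2$ and $g'=\cos u\cdot\sec(u+z)\tan(u+z)=gf$ are transparent. Either way, there is no real obstacle: the statement is essentially a reformulation of the closed forms in terms of an ODE system, so the only thing to watch is to perform the differentiation correctly and to check that the two simple initial values match $P_0$ and $Q_0$.
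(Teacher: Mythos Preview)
Your proof is correct and follows exactly the route the paper indicates first: verify the system directly from the closed forms in Proposition~\ref{elem}. The paper also notes, as a combinatorial alternative, that $f'=1+f^2$ can be read off the snakes and that $g'=fg$ follows from the relation $f=(\log g)'$ established via cycle-alternating permutations, but your explicit differentiation using $N'=D$, $D'=-N$ is precisely the verification the paper alludes to.
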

\begin{proof}
Of course, this can be checked on the closed form given in \eqref{genpq}. Also, $f'=1+f^2$ can be directly checked
on the snakes as in the case of alternating permutations \cite{Sta}, and $g'=fg$ follows from $f=(\log g)'$ previously seen.
\end{proof}

It is adequate to rewrite the equations in the following way:
\begin{align}
  \begin{cases} f = t+z+ \int f^2,  \\
    g = 1 + \int fg.
  \end{cases}
\end{align}
Let us begin with the case of $f(z)$. From $f = t+z+ \int f^2$, and proceeding as in \cite{LV},
$f(z)$ counts increasing trees that are recursively produced by the following rules, starting from an 
isolated node marked by $f$:
\begin{itemize}
\item a node marked by $f$ can become a leaf with no label (this corresponds to the term $t$ in 
      $f = t+z+ \int f^2$, $f(z)$, so these leaves will have a weight $t$),
\item a node marked by $f$ can become a leaf with an integer label, this label being the smallest integer 
      that does not already appear in the tree (this corresponds to the term $z$ in $f = t+z+ \int f^2$),
\item a node marked by $f$ can become an internal node having an integer label and two (ordered) sons marked by $f$ 
      (this corresponds to the last term $\int f^2$ in $f = t+z+ \int f^2$).
      As before, the integer label is the smallest integer that does not already appear in the tree.
\end{itemize}
More precisely, the coefficient of $z^n$ in $f(z)$ counts these trees having $n$ integer labels. See 
Figure~\ref{rectrees} for an example of tree produced by these rules.

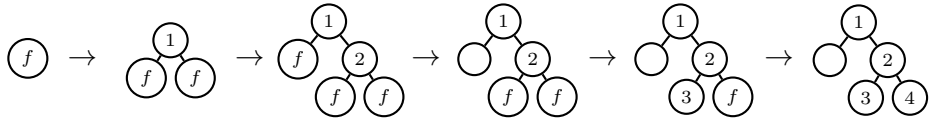
\begin{figure}[h!tp] \psset{unit=4mm}
\begin{pspicture}(30.5,5) 
\rput(0.7,0.4){
 \pstree{\Tcircle{\hbox{\tiny$f$}}} {}
}
\rput(2.5,2.5){$\rightarrow$}
\rput(5.4,2.5){
\pstree[treesep=1mm,levelsep=5mm]{ \Tcircle{\hbox{\tiny 1}} }{ \Tcircle{\hbox{\tiny $f$}} \Tcircle{\hbox{\tiny $f$}}   } 
}
\rput(8,2.5){$\rightarrow$}
\rput(11,2.5){
\pstree[treesep=3mm,levelsep=5mm]{ \Tcircle{\hbox{\tiny 1}} }{ \Tcircle{\hbox{\tiny $f$}}  {}
\pstree[treesep=1mm,levelsep=5mm]{ \Tcircle{\hbox{\tiny 2}} }{ \Tcircle{\hbox{\tiny $f$}} \Tcircle{\hbox{\tiny $f$}} } } 
}
\rput(13.8,2.5){$\rightarrow$}
\rput(16.8,2.5){
\pstree[treesep=3mm,levelsep=5mm]{ \Tcircle{\hbox{\tiny 1}} }{ \Tcircle{\hbox{\phantom{\tiny i}}}  {}
\pstree[treesep=1mm,levelsep=5mm]{ \Tcircle{\hbox{\tiny 2}} }{ \Tcircle{\hbox{\tiny $f$}} \Tcircle{\hbox{\tiny $f$}}  }  } 
}
\rput(19.6,2.5){$\rightarrow$}
\rput(22.6,2.5){
\pstree[treesep=3mm,levelsep=5mm]{ \Tcircle{\hbox{\tiny 1}} }{ \Tcircle{\hbox{\phantom{\tiny i}}}  {}
\pstree[treesep=1mm,levelsep=5mm]{ \Tcircle{\hbox{\tiny 2}} }{ \Tcircle{\hbox{\tiny 3}} \Tcircle{\hbox{\tiny $f$}} }  } 
}
\rput(25.4,2.5){$\rightarrow$}
\rput(28.4,2.5){
\pstree[treesep=3mm,levelsep=5mm]{ \Tcircle{\hbox{\tiny 1}} }{ \Tcircle{\hbox{\phantom{\tiny i}}}  {}
\pstree[treesep=1mm,levelsep=5mm]{ \Tcircle{\hbox{\tiny 2}} }{ \Tcircle{\hbox{\tiny 3}} \Tcircle{\hbox{\tiny 4}}   }  }
}
\end{pspicture}
\caption{Tree produced via the equation $f=t+z+\int f^2$. \label{rectrees} }
\end{figure}

The case of $g(z)$ is quite similar. Starting from an isolated node marked by $g$, each node marked by $g$
can become either a leaf with no label, or an internal node having two sons respectively marked by $f$
and $g$. Note that we need the production rules for $f$ to build a tree counted by $g$, and note also
that $g$ can produce an empty leaf which has no weight $t$, contrary to the empty leaves produced by $f$.

The two kinds of tree can also be given a non-recursive definition.

\begin{defn}
Let $\mathcal{T}_n$ be the set of complete binary trees, such that:
\begin{itemize}
\item except some leaves that are empty, the nodes are labelled by integers so that each $i\in[n]$ appears exactly once,
\item labels are increasing from the root to the leaves.
\end{itemize}
Let $\emp(T)$ be the number of empty leaves of $T\in\mathcal{T}_n$, and let $\mathcal{T}^*_n\subset\mathcal{T}_n$
be the subset of trees such that the rightmost leaf is empty.
\end{defn}
The production rules for $f$ (respectively, $g$) can be checked on the set $\mathcal{T}_n$ (respectively $\mathcal{T}^*_n$),
so that the result of the above discussion is the following.
\begin{thm} \label{thtree1}
We have:
\begin{equation}
P_n(t) = \sum_{T\in\mathcal{T}_{n}} t^{\emp(T)}, \qquad
Q_n(t) = \sum_{T\in\mathcal{T}^*_n} t^{\emp(T)-1}.
\end{equation}
\end{thm}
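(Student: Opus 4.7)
The plan is to apply the Leroux--Viennot combinatorial theory of differential equations \cite{LV} to the integral reformulation
\[
  f = t + z + \int f^{2}, \qquad g = 1 + \int fg
\]
of the system in the preceding lemma. This framework converts each summand on the right-hand side into a production rule for growing a labelled tree, while the formal integration $\int$ encodes the convention that the next label inserted is the smallest integer not yet used. In this way $f$ and $g$ become the exponential generating functions of two tree classes, whose elements I then need to identify with $\mathcal{T}_n$ and $\mathcal{T}_n^{*}$ respectively, with the correct tracking of $t$.

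First I would work out the $f$ case. The three terms $t$, $z$, $\int f^{2}$ produce respectively an empty leaf carrying a weight $t$, a labelled leaf (with the smallest unused integer), and a labelled internal node whose two ordered children are again $f$-marked. A straightforward induction shows that the resulting trees are complete binary trees whose non-empty vertices form an increasing labelling by $[n]$; conversely every $T \in \mathcal{T}_n$ arises from exactly one production sequence, namely the one that assigns the labels in the order $1,2,\dots,n$. The weight $t$ attached to each empty leaf is the only non-unit contribution, so
\[
  P_n(t) \;=\; n!\,[z^n] f(z) \;=\; \sum_{T \in \mathcal{T}_n} t^{\emp(T)}.
\]

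Next I would treat $g$. The two $g$-rules ($1$ and $\int fg$) force the right child of any internal $g$-node to be $g$-marked and its left child to be $f$-marked; by induction the right spine of the tree produced is entirely $g$-marked and terminates in the unique $g$-empty-leaf, while every off-spine subtree is an $f$-tree as above. Hence the produced trees are exactly the elements of $\mathcal{T}_n^{*}$, namely those $T \in \mathcal{T}_n$ whose rightmost leaf is empty. The rightmost leaf, being produced by the $g$-constant $1$, contributes no factor $t$; all other empty leaves, coming from the $f$-production, each contribute one $t$. Thus the total weight of such a tree is $t^{\emp(T)-1}$, giving the second identity.

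The main obstacle is the correct bookkeeping of the two kinds of empty leaves: distinguishing the unique $g$-empty-leaf (unweighted) from the $f$-empty-leaves (each weighted by $t$), and checking that the former is exactly the rightmost leaf of the tree. Once this structural observation is made, both the identification with $\mathcal{T}_n^{*}$ and the exponent $\emp(T)-1$ fall out of the Leroux--Viennot production, so the entire theorem reduces to a verification that the two sets of production rules match the two integral equations, together with the uniqueness of the increasing labelling.
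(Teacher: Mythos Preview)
Your proposal is correct and follows essentially the same approach as the paper: the Leroux--Viennot production rules derived from the integral equations $f=t+z+\int f^2$ and $g=1+\int fg$ are exactly those the paper writes down, and your identification of the $g$-produced trees with $\mathcal{T}_n^*$ via the observation that the unique unweighted $g$-leaf sits at the end of the right spine is precisely the structural point the paper uses (though the paper states it more tersely). There is no substantive difference between the two arguments.
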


\subsection{Increasing trees via normal ordering}
Some recent results of Błasiak and Flajolet~\cite{Bla} directly apply to this context and also give models of
$P_n(t)$ and $Q^{(a)}_n(t)$ in terms of increasing trees and forests. Let $D$ be the derivation with respect to $t$ and $U$
the multiplication by $t$ ({\it i.e.} we are in the particular $q=1$ of operators defined in \eqref{def_du}). There holds
$DU-UD=I$. A general idea in \cite{Bla} is that the coefficients $c_{i,j}$ in the normal form of $f(D,U)$ as in
\eqref{normalorder}, at least for certain particular forms of $f(D,U)$, naturally counts some labelled directed graphs 
which are produced by connecting some ``gates''. In the present case, when $f(D,U)$ is $(D+UDU)^n$ for $P_n$
and $(D+aU+UUD)^n$ for $Q^{(a)}_n$, and we obtain some increasing trees and forests. The theorem of this
subsection is a direct application of a main result of Flajolet and Błasiak~\cite[Theorem 1]{Bla}, so as a proof we will
roughly explain some ideas leading to the definition below and refer to \cite{Bla} for more details.

\begin{defn}
Let $\mathcal{F}_n$ be the set of plane rooted forests satisfying the following conditions:
\begin{itemize}
\item each root has exactly one child, and each of the other internal nodes has exactly two (ordered) children,
\item there are $n$ nodes labelled by integers from $1$ to $n$, but some leaves can
      be non-labelled (these are called {\it empty} leaves), and labels are increasing 
      from each root down to the leaves.
\end{itemize}
Note that the trees forming a forest are unordered. Let $\mathcal{U}_n\subset\mathcal{F}_n$ be the subset of trees, {\it i.e.}
forests with one connected component. For any tree or forest $T$, let $\emp(T)$ be the number of empty leaves, and let 
$\cc(T)$ be its number of connected components.
\end{defn}

For example, there are 11 elements in $\mathcal{F}_3$ and they are:

\medskip

\noindent
\pstree[levelsep=6mm]
       { \Tcircle{\tiny 1} }
       { \Tcircle{\tiny\phantom 1} }
\pstree[levelsep=6mm]
       { \Tcircle{\tiny 2} }
       { \Tcircle{\tiny\phantom 1} }
\pstree[levelsep=6mm]
       { \Tcircle{\tiny 3} }
       { \Tcircle{\tiny\phantom 1} }\;,
\qquad
\pstree[levelsep=6mm]
       { \Tcircle{\tiny 1} }
       { \Tcircle{\tiny 2} }
\pstree[levelsep=6mm]
       { \Tcircle{\tiny 3} }
       { \Tcircle{\tiny \phantom 1} }\;,
\qquad
\pstree[levelsep=6mm]
       { \Tcircle{\tiny 1} }
       { \Tcircle{\tiny 3} }
\pstree[levelsep=6mm]
       { \Tcircle{\tiny 2} }
       { \Tcircle{\tiny \phantom 1} }\;,
\qquad
\pstree[levelsep=6mm]
       { \Tcircle{\tiny 1} }
       { \Tcircle{\tiny \phantom 1} }
\pstree[levelsep=6mm]
       { \Tcircle{\tiny 2} }
       { \Tcircle{\tiny 3} }\;,
\qquad
\pstree[levelsep=6mm]
       { \Tcircle{\tiny 1} }
       { \pstree[levelsep=5mm,treesep=1mm]
            { \Tcircle{\tiny 2} }
            {{ \Tcircle{\tiny\phantom 1} }{ \Tcircle{\tiny\phantom 1} }}
       }
\hspace{-4mm}
\pstree[levelsep=6mm]
       { \Tcircle{\tiny 3} }
       { \Tcircle{\tiny \phantom 1} }\;,
\qquad
\pstree[levelsep=6mm]
       { \Tcircle{\tiny 1} }
       { \pstree[levelsep=5mm,treesep=1mm]
            { \Tcircle{\tiny 3} }
            {{ \Tcircle{\tiny\phantom 1} }{ \Tcircle{\tiny\phantom 1} }}
       }
\hspace{-4mm}
\pstree[levelsep=6mm]
       { \Tcircle{\tiny 2} }
       { \Tcircle{\tiny \phantom 1} }\;,

\bigskip

\noindent
\pstree[levelsep=6mm]
       { \Tcircle{\tiny 1} }
       { \Tcircle{\tiny \phantom 1} }
\hspace{-4mm}
\pstree[levelsep=6mm]
       { \Tcircle{\tiny 2} }
       { \pstree[levelsep=5mm,treesep=1mm]
            { \Tcircle{\tiny 3} }
            {{ \Tcircle{\tiny\phantom 1} }{ \Tcircle{\tiny\phantom 1} }}
       },
\qquad
\pstree[levelsep=6mm]
       { \Tcircle{\tiny 1} }
       { \pstree[levelsep=5mm,treesep=1mm]
            { \Tcircle{\tiny 2} }
            {{ \Tcircle{\tiny 3} }{ \Tcircle{\tiny\phantom 1} }}
       },
\qquad
\pstree[levelsep=6mm]
       { \Tcircle{\tiny 1} }
       { \pstree[levelsep=5mm,treesep=1mm]
            { \Tcircle{\tiny 2} }
            {{ \Tcircle{\tiny\phantom 1} }{ \Tcircle{\tiny 3} }}
       },
\qquad
\pstree[levelsep=6mm]
       { \Tcircle{\tiny 1} }
       { \pstree[levelsep=5mm,treesep=1mm]
            { \Tcircle{\tiny 2} }
            {{  \pstree[levelsep=5mm,treesep=1mm]
            { \Tcircle{\tiny 3} }
            {{ \Tcircle{\tiny\phantom 1} }{ \Tcircle{\tiny\phantom 1} }} }
            { \Tcircle{\tiny\phantom 1} } }
       },
\qquad
\pstree[levelsep=6mm]
       { \Tcircle{\tiny 1} }
       { \pstree[levelsep=5mm,treesep=1mm]
            {   \Tcircle{\tiny 2} }
            {{ \Tcircle{\tiny\phantom 1} }{  \pstree[levelsep=5mm,treesep=1mm]
            { \Tcircle{\tiny 3} }
            {{ \Tcircle{\tiny\phantom 1} }{ \Tcircle{\tiny\phantom 1}}}}}
       }.

\begin{thm} \label{thtree2}
We have:
\begin{equation}
P_n(t) = \sum_{T\in\mathcal{U}_{n+1}} t^{\emp(T)}, \quad
Q^{(a)}_n(t) = \sum_{T\in\mathcal{F}_n} a^{\cc(T)}t^{\emp(T)}.
\end{equation}
\end{thm}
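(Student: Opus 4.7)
The plan is to apply the gate/normal-ordering framework of Błasiak and Flajolet~\cite{Bla}, exactly as signaled in the statement. The first step is to rewrite the polynomials as operator histories. Since $U^2D=UUD$, the recurrence $Q^{(a)}_{n+1}=(I+U^2)DQ_n^{(a)}+aUQ_n^{(a)}$ with $Q_0^{(a)}=1$ yields
\begin{equation*}
Q_n^{(a)}(t)=(D+aU+UUD)^n\cdot 1,
\end{equation*}
and the recurrence $P_{n+1}=(I+U^2)DP_n$ with $P_0=t=U\cdot 1$ yields
\begin{equation*}
P_n(t)=(D+UUD)^n\cdot U\cdot 1.
\end{equation*}

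The second step is to recall the combinatorial reading of such expressions. Any power $(\sum_\alpha U^{a_\alpha}D^{b_\alpha})^n\cdot 1$ expands as a sum over \emph{histories}: at each time $i\in[n]$ one chooses a gate $g_i=U^{a_\alpha}D^{b_\alpha}$ together with an assignment of its $b_\alpha$ inputs to distinct currently-dangling outputs produced by earlier steps; after step $n$ some outputs remain dangling, and their number is the exponent of $t$ in the resulting monomial.

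The third step is to encode histories of $(D+aU+UUD)^n\cdot 1$ as forests in $\mathcal{F}_n$:
\begin{itemize}
\item firing $aU$ at time $i$ opens a new tree whose root has label $i$ and one dangling child;
\item firing $UUD$ at time $i$ selects one dangling output, promotes it to an internal node labelled $i$, and attaches two ordered dangling children below;
\item firing $D$ at time $i$ selects one dangling output and freezes it as a labelled leaf with label $i$.
\end{itemize}
The dangling outputs still present after step $n$ are the empty leaves, so the monomial produced by the history is $a^{\cc(T)}t^{\emp(T)}$, where $\cc(T)$ counts $aU$-firings and $\emp(T)$ counts surviving dangling strands. This construction is clearly reversible (scan the forest in increasing order of labels), giving a bijection between histories and $\mathcal{F}_n$ and proving the formula for $Q_n^{(a)}$.

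The same recipe handles $P_n$: applying $(D+UUD)^n$ to $U\cdot 1$ is the same as prepending a mandatory initial $U$-gate that creates a dangling strand attached to a root, then running $n$ time steps of $\{D,UUD\}$; after relabeling in increasing time order we obtain the labels $1,\dots,n+1$ with the root carrying label $1$. Since no $aU$-gate appears in $(D+UUD)^n$, no further trees are opened, so the resulting forest has a single connected component and belongs to $\mathcal{U}_{n+1}$. The same weight bookkeeping yields $P_n(t)=\sum_{T\in\mathcal{U}_{n+1}}t^{\emp(T)}$.

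The main obstacle is not the idea itself (it is a direct specialization of \cite[Theorem~1]{Bla}) but the careful matching of conventions: checking that the two outputs of the $UUD$ gate translate into the \emph{ordered} pair of children of a non-root internal node, that the $aU$-roots produce \emph{unordered} trees in the forest (their order is forgotten because they are distinguished by their distinct time-labels), and that the initial $U$ in the $P_n$ case is correctly responsible for the shift from $n$ to $n+1$ labels and for the single-component condition defining $\mathcal{U}_{n+1}$.
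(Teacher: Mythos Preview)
Your proposal is correct and follows essentially the same route as the paper: both rewrite $Q_n^{(a)}(t)=(D+aU+UUD)^n\cdot 1$ and $P_n(t)=(D+UUD)^nU\cdot 1$, invoke the gate/diagram formalism of B\l asiak--Flajolet, and identify the resulting labelled diagrams with the forests in $\mathcal{F}_n$ (resp.\ trees in $\mathcal{U}_{n+1}$) by attaching empty leaves to the free outgoing strands. Your explicit top-down ``history'' description of the bijection is a bit more concrete than the paper's phrasing via the normal-form coefficients $c_{n,i,j}$, but the argument is the same.
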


For example, the forests in $\mathcal{F}_3$ given above illustrate $Q_3=6t^3+5t$.
The last four elements of the list are the trees, and illustrate $P_2=2t^3+2t$.
By counting with a weight 2 on each connected component, we obtain from this 
list that $R_3=24t^3+16t$.

\begin{proof}
From the definition of derivative polynomials in terms of $D$ and $U$, and the
relation $DU-UD=I$, we have:
\begin{equation}
P_n(t) = (D+UUD)^nt, \qquad Q^{(a)}_n(t)=(D+aU+UUD)^n1.
\end{equation}
Let us consider the case of $Q^{(a)}_n$. Let $f_n(D,U)=(D+aU+UUD)^n$ and $c_{n,i,j}$ the coefficient of $U^iD^j$ in its
normal form, as in \eqref{normalorder}. Then from \cite[Theorem 1]{Bla}, $c_{n,i,j}$ counts {\it labelled diagrams} obtained
by connecting three kinds of ``gates'', one for each term in $D+aU+UUD$. More precisely to each term $U^kD^\ell$ we
associate a gate consisting of one node with $k$ outgoing strands and $\ell$ ingoing strands. See the left part of
Figure~\ref{treegate}. Then, $c_{n,i,j}$ count labelled diagrams obtained by connecting $n$ of these gates such that:
\begin{itemize}
 \item $i$ outgoing strands and $j$ ingoing strands are not connected,
 \item all other strands are connected, so that each ingoing strand is connected with an outgoing strand and these form a directed edge,
 \item the gates labelled by the integers in $\{1\dots n\}$, and labels are increasing when we follow a directed edge,
 \item at each node, the ingoing strands on one side and the outgoing strands on another side are ordered,
 \item there is a weight $a$ at each gate corresponding to the term $U$. 
\end{itemize}
We have
\begin{equation}
 Q_n^{(a)}(t) = ( D+aU+UUD  )^n1 = \Big( \sum_{i,j\geq0} c_{i,j} U^iD^j  \Big)1 = \sum_{i\geq0} c_{i,0}t^i
\end{equation}
so that we can only consider diagrams with no unconnected ingoing strand, and $t$ counts the unconnected outgoing strands.

The labelled diagrams described by the above rules are essentially the same as elements in $\mathcal{F}_n$:
it suffices to add an empty leaf at each unconnected outgoing strand to see the equivalence. It is clear that the node
corresponding to the term $U$ will appear exactly once in each connected component of the labelled diagrams, so
the parameter $a$ counts indeed the connected components.

 In the case of $P_n$, we can also consider $f_n(D,U)=(D+UUD)^nU$ and $c_{n,i,j}$ the coefficient of $U^iD^j$ in its
normal form, as in \eqref{normalorder}.
This case is somewhat different since $f_n(D,U)$ is not the $n$th power of some expression, but similar arguments
apply as well: the labelled diagrams that appear have $n+1$ gates, the gate labelled $1$ is of type $U$, all other gates are
of type $D$ or $UUD$. These labelled diagrams are the same as elements in $\mathcal{U}_{n+1}$, as in the previous
case we just have to add an empty leaf to each unconnected outgoing strand to see the equivalence.
As previously said, we refer to \cite{Bla} for precisions about this proof.
\end{proof}

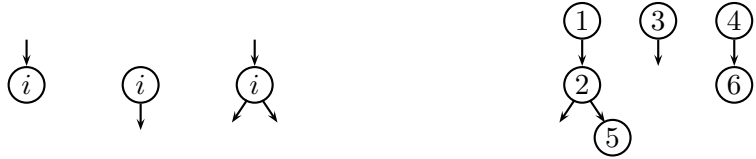
\begin{figure}[h!tp] \psset{unit=5mm}
 \begin{pspicture}(-0.5,0)(6.5,4)  
  \pscircle(0,2){0.5}\rput(0,2){$i$} \psline{->}(0,3.2)(0,2.5)
  \pscircle(3,2){0.5}\rput(3,2){$i$} \psline{->}(3,1.5)(3,0.8)
  \pscircle(6,2){0.5}\rput(6,2){$i$} \psline{->}(6,3.2)(6,2.5)\psline{->}(6.2,1.6)(6.6,1)\psline{->}(5.8,1.6)(5.4,1)
 \end{pspicture}
\hspace{3.5cm}
 \begin{pspicture}(0.4,0)(5.6,4.1) 
  \pscircle(1,2){0.5}\rput(1,2){$2$} \psline{->}(1,3.2)(1,2.5)\psline{->}(1.2,1.6)(1.6,1)\psline{->}(0.8,1.6)(0.4,1)
  \pscircle(1.8,0.6){0.5}\rput(1.8,0.6){$5$}\pscircle(1,3.7){0.5}\rput(1,3.7){$1$}
  \pscircle(3,3.7){0.5}\rput(3,3.7){$3$} \psline{->}(3,3.2)(3,2.5)
  \pscircle(5,3.7){0.5}\rput(5,3.7){$4$} \psline{->}(5,3.2)(5,2.5) \pscircle(5,2){0.5}\rput(5,2){$6$}
 \end{pspicture}
\caption{ The ``gates'' corresponding to terms in $D+aU+DUU$, and a labelled diagram obtained by connecting them. \label{treegate}}
\end{figure}

There is a simple bijection between $\mathcal{T}_{n}$ and $\mathcal{U}_{n+1}$: given $T\in\mathcal{T}_{n}$,
relabel the nodes by $i\mapsto i+1$, then add a new node with label $1$ on top of the root.
There is also a simple bijection between $\mathcal{T}^*_n$ and $\mathcal{F}_n$: let $T\in\mathcal{T}^*_n$,
remove the rightmost leaf, as well as all edges in the path from the root to the rightmost leaf, then the remaining
components form the desired forest. See Figure~\ref{bij_treef} for an example.
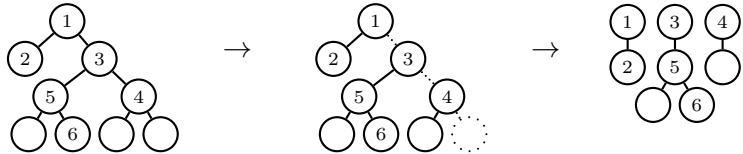
\begin{figure}[h!tp] \psset{unit=4mm}
\pstree[levelsep=5mm,treesep=5mm]
       { \Tcircle{\tiny 1} }
       { 
         { \Tcircle{\tiny 2} }
         { \pstree[levelsep=5mm,treesep=1mm]
            { \Tcircle{\tiny 3} }
            {
              { \pstree[levelsep=5mm,treesep=1mm]{\Tcircle{\tiny 5}}
                {{ \Tcircle{\tiny\phantom 1} }{ \Tcircle{\tiny 6} }}
              }
              { \pstree[levelsep=5mm,treesep=1mm]{\Tcircle{\tiny 4}}
                {{ \Tcircle{\tiny\phantom 1} }{ \Tcircle{\tiny\phantom 1} }}
              }
            }
         }
       }
\quad \begin{pspicture}(0,0) \rput(0,-1){$\to$}\end{pspicture} \qquad
\def\dedge{\ncline[linestyle=dotted,dotsep=0.5mm]}
\pstree[levelsep=5mm,treesep=5mm]
       { \Tcircle{\tiny 1} }
       { 
         { \Tcircle{\tiny 2} }
         { \pstree[levelsep=5mm,treesep=1mm]
            { \Tcircle[edge=\dedge]{\tiny 3} }
            {
              { \pstree[levelsep=5mm,treesep=1mm]{\Tcircle{\tiny 5}}
                {{ \Tcircle{\tiny\phantom 1} }{ \Tcircle{\tiny 6} }}
              }
              { \pstree[levelsep=5mm,treesep=1mm]{\Tcircle[edge=\dedge]{\tiny 4}}
                {{ \Tcircle{\tiny\phantom 1} }{ \Tcircle[linestyle=dotted,edge=\dedge]{\tiny\phantom 1} }}
              }
            }
         }
       }
\quad \begin{pspicture}(0,0) \rput(0,-1){$\to$}\end{pspicture} \qquad
\begin{pspicture}(0,0)(4,0) \rput(2,-1.4){
\pstree[levelsep=6mm]
       { \Tcircle{\tiny 1} }
       { \Tcircle{\tiny 2} }
\hspace{-4mm}
\pstree[levelsep=6mm]
       { \Tcircle{\tiny 3} }
       { \pstree[levelsep=5mm,treesep=1mm]
            { \Tcircle{\tiny 5} }
            {{ \Tcircle{\tiny\phantom 1} }{ \Tcircle{\tiny 6} }}
       }
\hspace{-4mm}
\pstree[levelsep=6mm]
       { \Tcircle{\tiny 4} }
       { \Tcircle{\tiny \phantom 1} }
}\end{pspicture}
\caption{\label{bij_treef}
The bijection from $\mathcal{T}^*_n$ to $\mathcal{F}_n$.
}
\end{figure}

Thus, Theorems \ref{thtree1} and \ref{thtree2} are essentially equivalent although obtained by different methods.
It is also in order to give a bijection between $\mathcal{U}_n$ and $\mathcal{C}^\circ_n$, and by applying this bijection
componentwise it will give a bijection between $\mathcal{F}_n$ and $\mathcal{C}_n$. It is more practical
to give the bijection from $\mathcal{T}_n$ to $\mathcal{S}_n$ (recall that we already have simple bijections
$\mathcal{T}_n\to\mathcal{U}_{n+1}$ and $\mathcal{S}_n\to\mathcal{C}^\circ_{n+1}$), so let $T\in\mathcal{T}_n$.
Consider the ``reading word'' of this tree (it can be defined by $w(T)=w(T_1)\;i\;w(T_2)$ if the tree $T$ has a
root labelled $i$, left son $T_1$ and right son $T_2$). This word contains integers from $1$ to $n$, and
some letters (say $\circ$) to indicate the empty leaves. The first step is to replace each $i$ with $n+1-i$ in
this word. To obtain the snake, replace each integer $i$ by $(-1)^{j+1}i$ where $j$ is the number of $\circ$
before $i$ in the word, then remove all $\circ$. See Figure~\ref{proj} for an example.

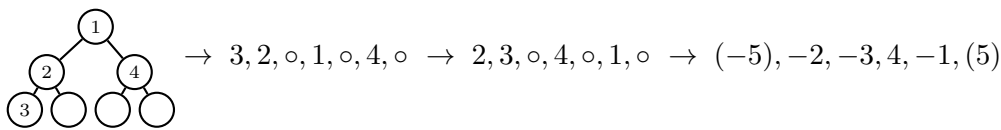
\begin{figure}[h!tp] \psset{unit=4mm}
       { \pstree[levelsep=6mm,treesep=1mm]
            { \Tcircle{\tiny 1} }
            {
              { \pstree[levelsep=5mm,treesep=1mm]{\Tcircle{\tiny 2}}
                {{ \Tcircle{\tiny 3} }{ \Tcircle{\tiny\phantom 1} }}
              }
              { \pstree[levelsep=5mm,treesep=1mm]{\Tcircle{\tiny 4}}
                {{ \Tcircle{\tiny\phantom 1} }{ \Tcircle{\tiny\phantom 1} }}
              }
            } } 
\begin{pspicture}(-0.8,0)(25.5,0) 
\rput(12,-1){$\to\; 3,2,\circ,1,\circ,4,\circ\;\to\; 2,3,\circ,4,\circ,1,\circ\;\to\; (-5),-2,-3,4,-1,(5)$}
\end{pspicture}
\caption{The bijection from $\mathcal{T}_n$ to $\mathcal{S}_n$. \label{proj} }
\end{figure}

Actually, this bijection can be highlighted by Proposition~\ref{propval} and the discussion leading to it.
Indeed, it is just a variant of a classical bijection between permutations and unary-binary increasing trees \cite{Vie},
where double ascents and double descents correspond to nodes having only one child, and valleys correspond
to leaves. Via these bijections, removing the empty leaves of $T\in\mathcal{T}_n$ is the same as taking the 
absolute value of a snake $S\in\mathcal{S}_n$. In the other direction, $T$ seen as a unary-binary tree together 
with the data of the empty leaves, is the same as a permutation together with a choice of signs making it into 
a snake.

\section*{Conclusion}

There is still a wide range of results on alternating permutations that might have a counterpart in the 
case of snakes, for example one could ask if André permutations and simsun permutations \cite{Sta},
both counted by $E_n$, have natural generalizations in signed permutations (as a first answer, see \cite{purtill}
for a definition of {\it André signed permutation}, counted by $S_n$). Another problem
is to give more combinatorial meaning to the Arnold-Seidel triangles \cite{Arn,Dum} used to compute the
integers $S_n$ (see \cite{GZ} for recent work on this subject in the case of integers $E_n$).
As for $q$-analogs, we have generalized the numbers $E_n(q)$ defined in \cite{HZR} but there is another
$q$-analog of $E_n$ related with inversions and major index \cite{Sta} that might also be extended to snakes.

Though we only discussed the type $B$ ones, Arnol'd~\cite{Arn} also defined some type $D$ snakes counted
by the Springer number $K(D_n)$. Besides, Hoffman showed that $K(D_n)$ is equal to $P_n(1)-Q_n(1)$, so that
it is also the number of $\pi\in\mathcal{S}_n$ such that $\pi_1<0$. As usual with type $D$, it is much
more difficult to obtain enumerative or bijective results with either of these two families, they are not even a
subset of the group of even-signed permutation $D_n$ as one could expect. Having said that, one possible
direction towards a better understanding of these objects could be the geometric definition of the Springer
number $K(W)$ in terms of hyperplane arrangements \cite{Arn}. For example, in this context it would be very
interesting to give the polynomial $Q_n(t)$ a geometric meaning that would refine
$Q_n(1)=K(\mathfrak{S}^B_n)$.

\section*{Acknowledgement}

Part of this research was done during a visit of the LABRI in Bordeaux, and I thank
all the Bordelais for welcoming me and for various suggestions concerning this work.

\bigskip


\bigskip


\begin{thebibliography}{999}

\bibitem{Arn}
 V.I. Arnol'd, 
 The calculus of snakes and the combinatorics of Bernoulli, Euler, and Springer numbers 
 for Coxeter groups, Russian Math. Surveys 47(1) (1992), 1--51. 

\bibitem{Bla}
 P. Błasiak and P. Flajolet, Combinatorial models of creation-annihilation, preprint (2010).


\bibitem{CFJ}
 W.Y.C. Chen, N.J.Y. Fan and J.Y.T. Jia,
 Labelled ballot paths and the Springer numbers, preprint (2010).



\bibitem{Corteel}
 S. Corteel, Crossings and alignments of permutations, Adv. in App. Math.
 38(2) (2007), 149--163.

\bibitem{CJW}
 S. Corteel, M. Josuat-Vergès and L.K. Williams, 
 Matrix Ansatz, orthogonal polynomials and permutations,
 to appear in Adv. in Appl. Math. (2010).


\bibitem{Dum}
 D. Dumont, Further triangles of Seidel-Arnold type and Continued Fractions
 related to Euler and Springer numbers, Adv. in Appl. Math. 16 (1995), 275--296.


\bibitem{FS2}
 D. Foata and M.-P. Schützenberger, Nombres d’Euler et permutations alternantes, 
 in J. N. Srivastava et al. (eds.), A Survey of Combinatorial Theory, North-Holland, 
 Amsterdam, 1973, pp. 173--187.

\bibitem{Fla}
 P. Flajolet, 
 Combinatorial aspects of continued fractions, 
 Discrete Math. 41 (1982), 145--153.


\bibitem{Gla}
 J.W.L. Glaisher, 
 On the Bernoullian function, 
 Quarterly J. of Pure and Appl. Math. 29 (1898), 1--168.

\bibitem{GJ}
 I.P. Goulden and D.M. Jackson, Combinatorial Enumeration, Wiley, New York, 1983.

\bibitem{GZ}
 Y. Gélineau, J. Zeng, Bijections for Entriger families,
 European J. Combin 32 (2011), 100--115.

\bibitem{HZR}
 G.-N. Han, A. Randrianarivony and J. Zeng, Un autre $q$-analogue des nombres
 d'Euler, Sém. Lothar. Combin. 42 (1999), Article B42e.

\bibitem{Hof}
 M.E. Hoffman, 
 Derivative polynomials, Euler polynomials, and associated integer sequences,
 Electron. J. Combin. 6 (1999), R21.


\bibitem{Josuat2}
 M. Josuat-Vergès, Combinatoric of the three-parameter PASEP partition function, preprint (2009).

\bibitem{KoSw}
 R. Koekoek, P.A. Lesky and R.F. Swarttouw,
 Hypergeometric orthogonal polynomials and their $q$-analogues,
 Springer, 2010.


\bibitem{LV}
 P. Leroux and X.G. Viennot, Combinatorial resolution of systems of differential 
 equations, I. Ordinary differential equations, in ``Combinatoire énumérative'', 
 eds. G. Labelle and P. Leroux, Lecture Notes in Math. 1234, Springer-Verlag,
 Berlin, 1986, pp. 210--245.

\bibitem{lothaire}
 Lothaire, Combinatorics on words, Addison--Wesley, 1983.

\bibitem{purtill}
M. Purtill, André permutations, lexicographic shellability and the $cd$-index of a convex polytope,
Trans. Amer. Math. Soc. 338(1) (1993), 77--104.

\bibitem{OEIS}
 N.J.A. Sloane, The on-line encyclopedia of integer sequences, 2010. \\
 \url{http://oeis.org/classic/A000111}

\bibitem{Spr}
 T.A. Springer, 
 Remarks on a combinatorial problem, Nieuw Arch. Wisk. 19(3) (1971), 30--36.

\bibitem{Sta}
 R.P. Stanley, 
 A survey of alternating permutations, to appear in Contemporary Mathematics (2010).

\bibitem{stieltjes}
 T.J. Stieltjes, Sur quelques intégrales définies et leur développement en fractions continues,
 Quart. J. Math. 24 (1890), 370--382; Œuvres complètes 2, Noordhoff and Groningen, 1918, pp. 378--394.

\bibitem{Vie}
 X.G. Viennot, Une théorie combinatoire des polynômes 
 orthogonaux, Lecture notes, UQÀM, Montréal, 1984.

\end{thebibliography}
\end{document}